\documentclass[12pt,a4paper]{article}
\usepackage[utf8]{inputenc}
\usepackage[english]{babel}
\usepackage{amsmath}
\usepackage{amsfonts}
\usepackage{amssymb}
\usepackage[left=1.5cm,right=1.5cm,top=2cm,bottom=2cm]{geometry}
\usepackage{amsthm}
\usepackage{multicol} 
\usepackage{latexsym}
\linespread{1.25}

\newcommand{\Div}[1]{\text{div}\,}

\newtheorem{cond}{{Condition}}
\newtheorem{remark}{{Remark}}
\newtheorem{lemma}{{Lemma}}[section]
\newtheorem{corollary}{Corollary}
\newtheorem{theorem}{{Theorem}}[section]

\numberwithin{equation}{section}
\linespread{1.25}
\title{On the Existence and Uniqueness of Solution of Boundary-Domain Integral Equations for the Dirichlet Problem for the Non-Homogeneous Heat Transfer Equation defined on a 2D Unbounded Domain}
\author{Z.W. Woldemicheal\footnote{First author}, C.Fresneda-Portillo}

\begin{document}\maketitle
\begin{center}
The authors gratefully acknowledge the financial support received from the London Mathematical Society Scheme 5, Award 51809, for Collaboration with Developing Countries.
\end{center}

\abstract{A system of segregated boundary-domain integral equations (BDIEs) is obtained from the Dirichlet problem for the diffusion equation in non-homogeneous media defined on an exterior two-dimensional domain. We use a parametrix different from the one employed by in \cite{dufera}. The system of BDIEs is formulated in terms of parametrix-based surface and volume potentials whose mapping properties are analysed in weighted Sobolev spaces. The system of BDIEs is shown to be equivalent to the original boundary value problem and uniquely solvable in appropriate weighted Sobolev spaces suitable for infinite domains. }

\section{Introduction}
Boundary Domain Integral Equations appear naturally when applying the Boundary Integral Method to boundary value problems with variable coefficient. This class of boundary value problems (BVPs) has a wide range of applications in Science  and Engineering, such as, heat transfer in non-homogeneous media \cite{ravnik}, motion of laminar fluids with variable viscosity \cite{carlosstokes}, or even in acoustic scattering\cite{acoustics}. 

The popularity of the Boundary Integral Method\cite{steinbach} is due to the reduction of the discretisation domain for boundary value problems with a homogeneous PDE and constant coefficients. For example, if the boundary value problem (BVP) is defined on a three dimensional domain, then, the boundary integral method reduces the BVP to an equivalent system of boundary integral equations (BIEs) defined only on the \textit{boundary} of the domain. However, this requires an explicit fundamental solution of the partial differential equation appearing in the BVP. Although these fundamental solutions may exist, they might not always be available explicitly for PDEs with variable coefficients. To overcome this obstacle, one can construct a \textit{parametrix} using the known fundamental solution. A discussion on fundamental solution existence theorems, algorithms for constructing fundamental solutions and parametrices is available in \cite{pomp}.  Classical examples of derivation of  Boundary Domain Integral Equations are:  for the diffusion equation\cite{mikhailov1} with variable coefficient in bounded domains in $\mathbb{R}^{3}$; same problem applying a different parametrix\cite{carloscomp}; the Dirichlet problem\cite{carloszenebe} in $\mathbb{R}^{2}$ and the mixed problem for the compressible Stokes system\cite{carlosstokes}, which is a good example of the application of the BDIE method to a PDE system. 

\textit{In this paper,} we explore a new family of parametrices for the operator 
\begin{equation}
\mathcal{A}u(x):= \sum_{i=1}^{2}\dfrac{\partial}{\partial x_{i}}\left(a(x)\dfrac{\partial u(x)}{\partial x_{i}}\right).\label{op1A}
\end{equation} 
of the form $$P^{x}(x,y)= P(x,y;a(x))=\dfrac{1}{2\pi a(x)}\mathrm{log}|x-y|$$
which can be useful at the time of studying BDIES derived from a BVP with a system of PDEs with variable coefficient as illustrated in \cite[Section 1]{carloscomp}. Note that this parametrix is different from the parametrix $P^{y}(x,y)$\cite{dufera}
\begin{equation}
P^{y}(x,y)= P(x,y;a(y)):=\dfrac{1}{2\pi a(y)}\mathrm{log}|x-y|,\,\, x, y \in \mathbb{R}^{2}.\label{pary2d}
\end{equation}

In particular, the work presented in this paper, will provide a method to obtain an equivalent system of BDIEs even when the single layer potential is not invertible and the domain of the BVP is unbounded. Although, there is some preliminary work related to BDIEs in two dimensional domains, see \cite{dufera}, this only relates to the family of parametrices $P^{y}(x,y)$ and therefore, the corresponding analysis for the family $P^{x}(x,y)$ in two dimensions is a problem that remains open, and that is the main purpose of this paper. This study aims to continue the work in \cite{carloszenebe} and will motivate the study of BDIEs for the Stokes system in 2D.

The theoretical study of parametrices which include the variable coefficient depending on different variables is helpful at the time of deriving BDIES for boundary value problems for systems of PDEs. For example, the parametrix for the Stokes system in three dimensions involves the variable viscosity coefficient with respect to $x$ and also with respect to $y$, see \cite{carlosstokes}. 

The numerical implementation of algorithms to solve BDIES in two dimensions\cite{2dnumerics, numerics2d} has shown that it is possible to obtain linear convergence with respect to the number of quadrature curves, and in some cases, exponential convergence. Moreover, there is analogous research in 3D which shows the successful implementation of fast algorithms to obtain the solution of boundary domain integral equations, see \cite{ravnik, numerics, sladek}. Therefore, we believe this method brings new techniques to solve inverse boundary value problems with variable coefficients that can be computationally implemented in an efficient fashion. Despite the success of the numerical implementations, some authors\cite{2dnumerics} highlight that there is not much research in the literature related to the theory or numerical solution of boundary-domain integral equations in 2D. 

In order to study the possible numerical advantages of the new family of parametrices of the form $P^{x}(x,y;a(x))$ with respect to the parametrices already studied, it is necessary to prove the unique-solvability of an analogous BDIES derived with this new family of parametrices which has not yet been done for the bidimensional exterior Dirichlet problem for the diffusion equation with variable coefficient.  

In unbounded domains, the Dirichlet problem is set in weighted Sobolev spaces to allow constant functions in unbounded domains to be possible solutions of the problem. Hence, all the mapping properties of the parametrix based potential operators are shown in weighted Sobolev spaces. 

An analysis of the uniqueness of the BDIES is performed by studying the Fredholm properties of the matrix operator which defines the system. Unlike for the case of bounded domains, the Rellich compactness embeding theorem is not available for Sobolev spaces defined over unbounded domains. Nevertheless, we present a lemma to reduce the remainder operator to two operators: one invertible and one compact. Therefore, we can still benefit from the Fredholm Alternative theory to prove uniqueness of the solution.

\section{Basic Notations and Spaces}
Let $\Omega=\Omega^{+}$ be an unbounded domain in $\mathbb{R}^{2}$ and let $\Omega^{-}:=\mathbb{R}^{2}\smallsetminus\overline{\Omega^{+}}$ be the complementary set of $\Omega^{+}$. Note that $\Omega^{-}$ is a bounded and open domain. Let us denote the boundary of $\Omega^{+}$ by $S$. We assume that $\partial \Omega$ is simply connected, compact and of class $\mathcal{C}^{1}(\mathbb{R}^{2})$. 

Let us define the partial differential operator related with the diffusion equation in non-homogeneous media in two dimensions. 
\begin{equation}\label{ch4operatorA}
\mathcal{A}u(x):=\sum_{i=1}^{2}\dfrac{\partial}{\partial x_{i}}\left(a(x)\dfrac{\partial u(x)}{\partial x_{i}}\right)\,x\in \Omega,
\end{equation}
 where $u(x)$ is the unknown function. The coefficient $a(x)$ is a given function. It is easy to see that if $a\equiv 1$ then, the operator $\mathcal{A}$ becomes $\Delta$, the Laplace operator. 
 
 \underline{Throughout the paper}, we will assume $a(x)\in \mathcal{C}^{1}(\overline{\Omega})\cap L^{\infty}(\Omega)$ and that there exist two constants, $C_{1},C_{2}\in \mathbb{R}$,  such that:
\begin{equation*}\label{cond1}
0 < C_{1} < a(x) < C_{2}.
\end{equation*}

  We will use the following function spaces in this paper (see e.g. \cite{mclean, lions, hsiao} for more details). Let $\mathcal{D}'(\Omega)$ be the Schwartz distribution space; $H^{s}(\Omega)$ and $H^{s}(S)$ with $s\in \mathbb{R}$, the Bessel potential spaces; the space $H^{s}_{K}(\mathbb{R}^{2})$ consisting of all the distributions of $H^{s}(\mathbb{R}^{2})$ whose support is inside of a compact set $K\subset \mathbb{R}^{2}$; the spaces consisting of distributions in $H^{s}(K)$ for every compact $K\subset \overline{\Omega^{-}},\hspace{0.1em}s\in\mathbb{R}$. Last, let $\widetilde{H}^{s}(\Omega)=\lbrace g\in H^{s}(\mathbb{R}^{2}): {\rm supp}(g)\subset \overline{\Omega}\rbrace$.

Sobolev spaces and Bessel potential spaces defined on an unbounded domain become quite restrictive as the class of constant functions no longer belongs to these spaces. This is easy to see as the function $h(x)=1$ is not $L^{2}(\Omega)$ integrable since the area (measure) of $\Omega$ is not finite. However, by introducing weighted Sobolev spaces, we embed the class of constant functions into the Sobolev space\cite[Lemma 1]{dufera}. 

We will now introduce the following weighted Sobolev spaces\cite{exterior, KMWext20, carlosext} which are useful when dealing with exterior problems, since constant  functions are allowed to be solutions of the problem. In order to define weighted Sobolev spaces in $\mathbb{R}^{2}$, we will make use of the weight\cite{dufera} $\omega_{2}:\Omega\longrightarrow\mathbb{R}_{+}$ given by $\omega_{2}(x) = (1+\vert x\vert^{2})^{\frac{1}{2}}\mathrm{ln}(2+\vert x \vert^{2}).$ Using $\omega_{2}$, we can define the following weighted spaces\cite{giroire2,hanouzet,exterior} 
\begin{itemize}
   \item Weighted Lebesgue space $L_{2}(\omega_{2};\Omega):=\lbrace f \,\, \vert \,\,\, \omega_{2}f\in L_{2}(\Omega)\rbrace$;
   \item Sobolev weighted space \begin{equation} \label{wsob}
 \mathcal{H}^{1}(\Omega):=\lbrace f\in L_{2}(\omega^{-1}_{2};\Omega):\nabla f\in L_{2}(\Omega)\rbrace, 
 \end{equation} endowed with the norm $$\parallel f\parallel ^{2}_{\mathcal{H}^{1}(\Omega)} := \parallel \omega_{2}^{-1}f \parallel ^{2}_{L^{2}(\Omega)} + \parallel\nabla f\parallel^{2}_{L^{2}(\Omega)}.$$
\end{itemize}

For the functions from $\mathcal{H}^{1}(\Omega),$ the semi-norm $$|f|_{\mathcal{H}^{1}(\Omega)}:=\Vert \nabla f\Vert_{L_{2}(\Omega)}$$ is equivalent\cite[ Chapter XI, Part B, \S 1]{lions} to the norm $\Vert\cdot\Vert_{\mathcal{H}^{1}(\Omega)}.$ 

The space $\mathcal{D}(\mathbb{R}^{2})$ is dense\cite{amrouche1} in $\mathcal{H}^{1}(\mathbb{R}^{2})$. This implies that the dual space of $\mathcal{H}^{1}({\mathbb{R}^{2}}),$ denoted by $\mathcal{H}^{-1}(\mathbb{R}^{2}),$ is a space of distributions. Note that $\mathcal{D}(\overline{\Omega})$ is dense in $\mathcal{H}^{1}(\Omega)$\cite[Section 1]{amrouche00}. 

Let us introduce $\widetilde{{\mathcal{H}}}^{1}(\Omega)$ as the completion of ${\mathcal{D}}(\Omega)$ in ${\mathcal{H}}^{1}(\mathbb{R}^{2})$; let $\widetilde{{\mathcal{H}}}^{-1}(\Omega) := [{\mathcal{H}}^{1}(\Omega)]^{*}$ and ${\mathcal{H}}^{-1}(\Omega) := [ \widetilde{{\mathcal{H}}}^{1}(\Omega)]^{*}$ be the corresponding dual spaces. 

The inclusion $L_{2}(\omega_{2};\Omega)\subset\mathcal{H}^{-1}(\Omega)$ holds and a distribution $f$ in the dual space $\widetilde{\mathcal{H}}^{-1}(\Omega)$ can be represented as $f=\sum_{i=1}^{2}\partial_{x_{i}}g_{i}+f_{0},$ where $g_{i}\in L_{2}(\mathbb{R}^{2})$ and is zero outside $\Omega,~f_{0}\in L_{2}(\omega_{2};\Omega),$ cf. e.g., \cite[Section 2.5]{nedelec3}. This implies that $\mathcal{D}(\Omega)$ is dense in $\widetilde{\mathcal{H}}^{-1}(\Omega)$ and $\mathcal{D}(\mathbb{R}^{2})$ is dense in $\mathcal{H}^{-1}(\mathbb{R}^{2}).$ 

For $u\in\mathcal{H}^{1}(\Omega)$ the operator $\mathcal{A}$ is well defined\cite{exterior} in the distributional sense as long as the variable coefficient $a\in L^{\infty}(\Omega),$ as
\begin{equation}\label{ch5exbilinearA}
\langle \mathcal{A}u, v \rangle_{\Omega} = - \langle a\nabla u, \nabla v\rangle_{\Omega} =-\mathcal{E}(u,v) \quad \forall v \in \mathcal{D}(\Omega),
\end{equation}
where 
\begin{equation}\label{ch5exE}
\mathcal{E}(u,v) : = \int_{\Omega} E(u,v) (x) dx; \quad\quad E(u,v)(x):= a(x) \nabla u(x)\nabla v(x). 
\end{equation}

The boundedness of the variable coefficient $a(x)$, reflected in \ref{cond1}, is required to guaranteed the continuity of the functional $\mathcal{E}(u,v):\mathcal{H}^{1}(\Omega)\times \widetilde{\mathcal{H}}^{1}(\Omega)\longrightarrow \mathbb{R}$ and, thus, the continuity of continuity of the operator $\mathcal{A}:\mathcal{H}^{1}(\Omega)\longrightarrow\mathcal{H}^{-1}(\Omega)$ which gives the distributional form of the operator $\mathcal{A}$ given in \eqref{ch4operatorA}.

The Trace Theorem can be extended to weighted Sobolev spaces, i.e. if $w\in \mathcal{H}^{1}(\Omega)$ then $\gamma^{\pm}w\in H^{\frac{1}{2}}(S)$\cite[Section 2.2.2]{MKW20tr} and the trace operators $\gamma^{\pm}$ are linear continuous and surjective. 

The conormal derivative operator acting on $S$, understood in the trace sense, is given by
\begin{equation}\label{ch5conormal}
T^{\pm}[u(x)] :=\sum_{i=1}^{2}a(x)n_{i}(x)\gamma^{\pm}\left( \dfrac{\partial u}{\partial x_{i}}\right)=a(x)\gamma^{\pm}\left( \dfrac{\partial u(x)}{\partial n(x)}\right),
\end{equation}
where $n(x)$ is the exterior unit normal vector to the domain $\Omega$ at a point $x\in S$. It is well known that for $u\in \mathcal{H}^{1}(\Omega)$, the classical co-normal derivative operator may not exist\cite{traces}. However, one can overcome this difficulty by introducing the following function space\cite{exterior, giroire2, hanouzet} for the operator $\mathcal{A}$,
\begin{equation}\label{ch5spaceHA}
\mathcal{H}^{1,0}(\Omega;\mathcal{A}) := \lbrace g\in \mathcal{H}^{1}(\Omega): \mathcal{A}g\in L^{2}(\omega; \Omega)\rbrace
\end{equation}
endowed with the norm
\begin{center}
$\parallel g\parallel ^{2}_{\mathcal{H}^{1,0}(\Omega;\mathcal{A})} := \parallel g \parallel ^{2}_{\mathcal{H}^{1}(\Omega)} + \parallel \omega_{2} \mathcal{A}g\parallel^{2}_{L^{2}(\Omega)}.$
\end{center}
When $u\in \mathcal{H}^{1,0}(\Omega; \mathcal{A})$, we can correctly define the conormal derivative $T^{+}u\in H^{-\frac{1}{2}}(S)$ using the first Green identity as follows\cite[Section 2]{exterior},
\begin{equation}\label{ch5green1}
\langle T^{+}u, w\rangle_{S}:= \pm \int_{\Omega^{\pm}}[(\gamma^{+}_{-1}\omega)\mathcal{A}u +E(u,\gamma_{-1}^{+}w)]\,\, dx;\,\, \text{for all}\,\, w\in H^{\frac{1}{2}}(S),
\end{equation}
where $\gamma_{-1}^{+}: H^{\frac{1}{2}}(S)\rightarrow \mathcal{H}^{1}(\Omega)$ is a continuous right inverse to the trace operator $\gamma^{+}:\mathcal{H}^{1}(\Omega) \longrightarrow H^{\frac{1}{2}}(S)$.

The operator $T^{+}:\mathcal{H}^{1,0}(\Omega; \mathcal{A})\longrightarrow H^{\frac{-1}{2}}(S)$ is linear, bounded and gives a continuous extension on $\mathcal{H}^{1,0}(\Omega; \mathcal{A})$ of the classical co-normal derivative operator \eqref{ch5conormal}. We remark that when $a\equiv 1$, the operator $T^{+}$ becomes $T^{+}_{\Delta}:= n\cdot \nabla$, which is the continuous extension on $\mathcal{H}^{1,0}(\Omega; \Delta)$ of the classical normal derivative operator. Furthermore, the first Green identity holds\cite[Section 2]{exterior} for any  distribution $u\in\mathcal{H}^{1,0}(\Omega;\mathcal{A}),$ 
\begin{equation}\label{ch5GF1}
\langle T^{+}u ,\gamma^{+}v\rangle_{S}=\displaystyle\int_{\Omega}[v\mathcal{A}u + E(u,v)] dx, \quad \forall v\in\mathcal{H}^{1}(\Omega).
\end{equation}
As a consequence of the first Green identity \eqref{ch5GF1} and the symmetry of $E(u,v)$, the second Green identity holds for any $u,v\in\mathcal{H}^{1,0}(\Omega;\mathcal{A})$ 
\begin{equation}\label{ch5secondgreen}
\displaystyle\int_{\Omega}\left[v\mathcal{A}u - u\mathcal{A}v\right]\,dx= \int_{S}\left[\gamma^{+}v\,T^{+}u-\gamma^{+}u\,T^{+}v\right]\,dS(x).
\end{equation}
%\begin{remark}
%If $a$ satisfies condition () and the second condition in (), then $\Vert ga\Vert_{\mathcal{H}^{1}(\Omega)}\leqslant c_{1}\Vert g \Vert_{\mathcal{H}^{1}(\Omega)},~\Vert g\frac{1}{a}\Vert_{\mathcal{H}^{1}(\Omega)}\leqslant\Vert g \Vert_{\mathcal{H}^{1}(\Omega)},$ where the constants $c_{1}$ and $C_{2}$ are independent of $g\in\mathcal{H}^{1}(\Omega),$ this means, $a$ and $\frac{1}{a}$ are multipliers in the space $\mathcal{H}^{1}(\Omega).$
%\end{remark}

%Employing the first Green identity \eqref{ch5GF1} with $v=1,$ we arrive at the following assertion.
%\begin{lemma}\label{newlem1}
%If $u\in\mathcal{H}^{1,0\bot}(\Omega;\mathcal{A})$ then $T^{+}u\in H^{-\frac{1}{2}}_{*}(S).$
%\end{lemma}
\section{Boundary Value Problem}
We aim to derive boundary-domain integral equation systems for the following \textit{Dirichlet} boundary value problem defined in an exterior $\Omega$. Given $f\in L^{2}(\omega_{2};\Omega)$ and $\varphi_{0}\in H^{\frac{1}{2}}(S)$, we seek a function $u\in\mathcal{H}^{1,0}(\Omega;\mathcal{A})$ such that 
\begin{subequations}\label{ch4BVP}
\begin{align}
\mathcal{A}u&=f,\hspace{1em}\text{in}\hspace{1em}\Omega\label{ch4BVP1};\\
\gamma^{+}u &= \varphi_{0},\hspace{1em}\text{on}\hspace{1em} S\label{ch4BVP2}
\end{align}
where equation \eqref{ch4BVP1} is understood in the weak sense, the Dirichlet condition \eqref{ch4BVP2} is understood in the trace sense.
\end{subequations}

Let us denote the left hand side operator of the Dirichlet problem as 
\begin{equation}\label{oper2}
\mathcal{A}_{D}:[\mathcal{A},\gamma^{+}]:\mathcal{H}^{1,0}(\Omega;\mathcal{A})\longrightarrow L_{2}(\omega_{2};\Omega)\times H^{\frac{1}{2}}(S),
\end{equation}
 By using variational settings and the Lax-Milgram Lemma, similar to the proof in \cite[Theorem A.1]{exterior} for the three dimensional case it is possible to prove that the operator \eqref{oper2} is continuously invertible and thus the unique solvability of the BVP \eqref{ch4BVP1}-\eqref{ch4BVP2} follows.

We define a parametrix (Levi function) $P(x,y)$ for a differential operator $\mathcal{A}_{x}$ differentiating with respect to $x$ as a function on two variables that satisfies
 \begin{equation}\label{parametrixdef}
 \mathcal{A}_{x}P(x,y) = \delta(x-y)+R(x,y).
 \end{equation}
 where $\delta (.)$ is a Dirac-delta distribution, while $R(x,y)$ is a remainder possessing at most a weak (integrable) singularity at $ x=y $.
 
In this paper we will use the same parametrix as in \cite{carloszenebe, carloscomp}
\begin{equation*}\label{ch5P2002}
P(x,y)=\dfrac{1}{a(x)} P_\Delta(x-y),\hspace{1em}x,y \in \mathbb{R}^{2},
\end{equation*} 
whose corresponding remainder is 
\begin{equation}
\label{ch53.4} R(x,y)
=-\sum\limits_{i=1}^{2}\dfrac{\partial}{\partial x_{i}}\left(  \frac{1}{a(x)} \frac{\partial a(x)}{\partial x_i} P_\Delta(x-y)\right),\quad \text{where}\,\,\, P_{\Delta}(x-y)=\dfrac{1}{2\pi a(y)}\mathrm{log}|x-y|\
%=\sum\limits_{i=1}^{3}\frac{x_i-y_i}{4\pi \,a(y)\,|x-y|^3}\, \frac{\partial a(x)}{\partial x_i}
\,\,\;\;\;x,y\in {\mathbb R}^2.
\end{equation}
Here, $P_{\Delta}(x-y)$ represents the fundamental solution for the Laplace equation in two dimensions. 

\section{Volume and surface potentials}
The parametrix-based logarithmic and remainder potential operators are respectively defined, similar to \cite{{mikhailov1},{carlosext}} in the 3D case for $y\in\mathbb R^2$, as
\begin{align*}
\mathcal{P}\rho(y)&:=\displaystyle\int_{\Omega} P(x,y)\rho(x)\hspace{0.25em}dx\\
\mathcal{R}\rho(y)&:=\displaystyle\int_{\Omega} R(x,y)\rho(x)\hspace{0.25em}dx.
\end{align*}

Note that when, in the definition above, $\Omega=\mathbb{R}^{2}$ we will denote the operators $\mathcal{P}$ and $\mathcal{R}$ by $\textbf{P}$ and $\textbf{R}$ respectively, and relations to \eqref{ch4relP} and \eqref{ch4relR} hold for them as well.

The parametrix-based single layer and double layer  surface potentials are defined for $y\in\mathbb R^2:y\notin S $, as 
\begin{equation*}\label{ch4SL}
V\rho(y):=-\int_{S} P(x,y)\rho(x)\hspace{0.25em}dS(x),
\end{equation*}
\begin{equation*}\label{ch4DL}
W\rho(y):=-\int_{S} T_{x}^{+}P(x,y)\rho(x)\hspace{0.25em}dS(x).
\end{equation*}

We also define the following pseudo-differential operators associated with direct values of the single and  double layer potentials and with their conormal derivatives, for $y\in S$,
\begin{align}
\mathcal{V}\rho(y)&:=-\int_{S} P(x,y)\rho(x)\hspace{0.25em}dS(x),\nonumber \\
\mathcal{W}\rho(y)&:=-\int_{S} T_{x}P(x,y)\rho(x)\hspace{0.25em}dS(x),\nonumber\\
\mathcal{W'}\rho(y)&:=-\int_{S} T_{y}P(x,y)\rho(x)\hspace{0.25em}dS(x),\nonumber\\
\mathcal{L}^{\pm}\rho(y)&:=T_{y}^{\pm}{W}\rho(y)\nonumber.
\end{align}

The operators $\mathcal P, \mathcal R, V, W, \mathcal{V}, \mathcal{W}, \mathcal{W'}$ and $\mathcal{L}$ can be expressed in terms the volume and surface potentials and operators associated with the Laplace operator, as follows
\begin{align}
\mathcal{P}\rho&=\mathcal{P}_{\Delta}\left(\dfrac{\rho}{a}\right),\label{ch4relP}\\
\mathcal{R}\rho&=\nabla\cdot\left[\mathcal{P}_{\Delta}(\rho\,\nabla \ln a)\right]-\mathcal{P}_{\Delta}(\rho\,\Delta \ln a),\label{ch4relR}\\
V\rho &= V_{\Delta}\left(\dfrac{\rho}{a}\right),\label{ch4relSL}\\
\mathcal{V}\rho &= \mathcal{V}_{\Delta} \left( \dfrac{\rho}{a}\right),\label{ch4relDVSL}\\
W\rho &= W_{\Delta}\rho -V_{\Delta}\left(\rho\frac{\partial \ln a}{\partial n}\right),\label{ch4relDL}\\
\mathcal{W}\rho &= \mathcal{W}_{\Delta}\rho -\mathcal{V}_{\Delta}\left(\rho\frac{\partial \ln a}{\partial n}\right),\label{ch4relDVDL}\\
\mathcal{W}'\rho &= a \mathcal{W'}_{\Delta}\left(\dfrac{\rho}{a}\right),\label{ch4relTSL} \\
\mathcal{L}^{\pm}\rho &= \widehat{\mathcal{L}}\rho - aT^{\pm}_\Delta V_{\Delta}\left(\rho\frac{\partial \ln a}{\partial n}\right),
\label{ch4relTDL}\\
\widehat{\mathcal{L}}\rho &:= a\mathcal{L}_{\Delta}\rho.\label{ch4hatL}
\end{align}

The symbols with the subscript $\Delta$ denote the analogous operator for the constant coefficient case, $a\equiv 1$. Furthermore, by the Liapunov-Tauber theorem \cite{tauber2}, $\mathcal{L}_{\Delta}^{+}\rho = \mathcal{L}_{\Delta}^{-}\rho = \mathcal{L}_{\Delta}\rho$.
 
 To guarantee the continuity, and thus boundedness, of the surface and volume integral operators, we will need to impose conditions on the variable coefficient as well as on its derivatives. 

 \begin{cond}\label{cond2} To obtain boundary-domain integral equations, we will assume the following condition further on unless stated otherwise:
\begin{equation*}
a\in \mathcal{C}^{1}(\mathbb{R}^{2})\quad and \quad \omega_{2}\nabla a \in L^{\infty}(\mathbb{R}^{2}).
\end{equation*}
\end{cond}

 \begin{remark}\label{ch5remmult}
If $a$ satisfies \eqref{cond1} and \eqref{cond2}, then $\parallel ga\parallel_{\mathcal{H}^{1}(\Omega)} \leq k_{1}\parallel g \parallel_{\mathcal{H}^{1}(\Omega)} $, $\parallel g/a \parallel_{\mathcal{H}^{1}(\Omega)}\leq k_{2}\parallel g \parallel_{\mathcal{H}^{1}(\Omega)}$ where the constants $k_{1}$ and $k_{2}$ do not depend on $g\in\mathcal{H}^{1}(\Omega)$, i.e., the functions $a$ and $1/a$ are multipliers in the space $\mathcal{H}^{1}(\Omega)$. Furthermore, as long as $a\in \mathcal{C}^{1}(S)$, then $\dfrac{\partial a}{\partial n}$ is also a multiplier. 
\end{remark}

 \begin{theorem}\label{thmVW} The following operators are continuous under Condition \ref{cond2},
 \begin{align*}
 V&:H^{-\frac{1}{2}}_{*}(S) \longrightarrow \mathcal{H}^{1}(\Omega),\hspace{0.5em},\\
 W&:H^{\frac{1}{2}}(S) \longrightarrow \mathcal{H}^{1}(\Omega).\hspace{0.5em}
 \end{align*}
 \end{theorem}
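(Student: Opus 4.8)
The plan is to reduce the continuity of the variable-coefficient potentials $V$ and $W$ to the already-understood mapping properties of their Laplace counterparts $V_\Delta$ and $W_\Delta$, exploiting the factorisation formulas \eqref{ch4relSL} and \eqref{ch4relDL}. The underlying principle is that the new parametrix $P(x,y)=\tfrac{1}{a(x)}P_\Delta(x-y)$ differs from the fundamental solution only by the multiplier $1/a(x)$, so each variable-coefficient potential is a composition of a constant-coefficient potential with multiplication operators, and Remark \ref{ch5remmult} guarantees that $a$, $1/a$, and $\partial a/\partial n$ are all bounded multipliers on the relevant weighted spaces.

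For the single layer potential, I would start from the identity $V\rho = V_\Delta\!\left(\rho/a\right)$. First I would invoke the known continuity of the Laplace single layer operator $V_\Delta : H^{-\frac12}_*(S)\to\mathcal{H}^1(\Omega)$ on the exterior domain (this is the two-dimensional weighted-space analogue of the classical result, and the starred space $H^{-\frac12}_*(S)$ is precisely the codimension-one subspace on which $V_\Delta$ lands in the weighted space rather than merely growing logarithmically). Then, since $1/a$ is a multiplier on $H^{-\frac12}(S)$ (which follows from $a\in\mathcal{C}^1(S)$ together with Condition \ref{cond2}), the map $\rho\mapsto\rho/a$ is bounded on $H^{-\frac12}_*(S)$, and composing with $V_\Delta$ yields the continuity of $V$. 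The one technical point to verify here is that multiplication by $1/a$ preserves the space $H^{-\frac12}_*(S)$, i.e.\ respects the orthogonality/moment condition defining the starred subspace; I would check this directly from the definition of $H^{-\frac12}_*(S)$.

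For the double layer potential I would use \eqref{ch4relDL}, namely $W\rho = W_\Delta\rho - V_\Delta\!\left(\rho\,\partial_n\ln a\right)$, and treat the two terms separately. The first term is handled by the continuity of the Laplace double layer operator $W_\Delta : H^{\frac12}(S)\to\mathcal{H}^1(\Omega)$, which I take as known. For the second term I would argue that $\partial_n\ln a = \tfrac{1}{a}\partial a/\partial n$ is a bounded multiplier on $H^{\frac12}(S)$ by Remark \ref{ch5remmult} (using $a,1/a\in\mathcal{C}^1(S)$), so $\rho\mapsto\rho\,\partial_n\ln a$ maps $H^{\frac12}(S)$ boundedly into $H^{\frac12}(S)$; composing with the Laplace single layer $V_\Delta : H^{\frac12}(S)\hookrightarrow H^{-\frac12}_*(S)\to\mathcal{H}^1(\Omega)$ then gives boundedness of the second term into $\mathcal{H}^1(\Omega)$. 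Summing the two continuous contributions gives the claim for $W$.

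The main obstacle I anticipate is not in the composition arguments, which are routine once the multiplier property is in hand, but rather in establishing (or correctly citing) the continuity of the \emph{Laplace} potentials $V_\Delta$ and $W_\Delta$ into the \emph{weighted} space $\mathcal{H}^1(\Omega)$ in two dimensions. In 2D the logarithmic kernel forces one to be careful: the single layer potential of a generic density grows like $\log|y|$ at infinity and hence only lies in the weighted space $\mathcal{H}^1$ after imposing the vanishing-moment condition that defines $H^{-\frac12}_*(S)$, which is exactly why the domain of $V$ is the starred space while that of $W$ is the full $H^{\frac12}(S)$. I would therefore devote the most care to pinning down these weighted-space estimates for the constant-coefficient operators (appealing to the weighted-space theory in the cited references \cite{exterior, dufera} and the equivalence of $|\cdot|_{\mathcal{H}^1(\Omega)}$ with the full norm), since the variable-coefficient result follows from them by the multiplier bounds with essentially no further analytic input.
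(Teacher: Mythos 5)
Your proposal follows essentially the same route as the paper's proof: both reduce $V$ and $W$ to the constant-coefficient potentials via the relations \eqref{ch4relSL} and \eqref{ch4relDL}, and then combine the multiplier properties of $a$, $1/a$ and $\partial a/\partial n$ from Remark \ref{ch5remmult} with the known continuity of $V_\Delta$ and $W_\Delta$, so in outline you have reproduced the printed argument. Two of your explicit intermediate claims, however, do not hold as stated, and they are exactly the delicate points you yourself flag. First, multiplication by $1/a$ does \emph{not} in general preserve the zero-mean condition defining $H^{-1/2}_*(S)$: for non-constant $a$, $\langle\rho,1\rangle_S=0$ does not imply $\langle\rho/a,1\rangle_S=0$, so the reduction of $V$ to ``$V_\Delta$ restricted to the starred space'' does not go through as you describe. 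Second, the embedding $H^{1/2}(S)\hookrightarrow H^{-1/2}_*(S)$ you invoke for the term $V_\Delta\left(\rho\,\partial_n\ln a\right)$ is false, since $\rho\,\partial_n\ln a$ need not have vanishing mean; by your own (correct) observation about the logarithmic growth of the 2D single layer potential at infinity, that term then need not lie in $\mathcal{H}^1(\Omega)$. In fairness, the paper's own proof sidesteps both issues by citing the bounded-domain mapping property $V_\Delta:H^{-1/2}(S)\to H^1(\Omega)$ from \cite{steinbach} and never actually using the starred condition, so your proposal is no less rigorous than the printed proof --- it merely makes explicit the weighted-space estimates for $V_\Delta$ and $W_\Delta$ on the exterior domain that both arguments ultimately need and neither fully supplies.
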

 
 \begin{proof}
Let us first prove the mapping property for the operator $V$. Let $g \in H_{*}^{-\frac{1}{2}}(S)\subset  H^{-\frac{1}{2}}(S)$, then $\dfrac{g}{a}$ also belongs to $H^{-1/2}(S)$ by virtue of Remark \ref{ch5remmult} and Condition \ref{cond1}. Then, relation \eqref{ch4relSL} along with the mapping property $V_{\Delta}:H^{-1/2}(S)\longrightarrow H^{1}(\Omega)\subset\mathcal{H}^{1}(\Omega;\Delta)$\cite[Lemma 6.6]{steinbach} imply that $Vg = V_{\Delta}\left(g/a\right)\in \mathcal{H}^{1}(\Omega;\Delta)$ from where it follows the result $Vg\in \mathcal{H}^{1}(\Omega)$. 

Let us prove now the result for the operator $W$. If $g \in H^{1/2}(S)$, then $\partial_{n}(\ln a)g$ also belongs to $H^{1/2}(S)$ in virtue of Remark \ref{ch5remmult} and Condition \ref{cond1}. Then, relation \eqref{ch4relDL} along with the mapping properties $V_{\Delta}:H^{-1/2}(S)\longrightarrow \mathcal{H}^{1}(\Omega;\Delta)$\cite[Lemma 6.6]{steinbach} and $W_{\Delta}:H^{1/2}(S)\longrightarrow H^{1}(\Omega)$\cite[Lemma 6.10]{steinbach} imply that $Wg\in H^{1}(\Omega)$ from where it follows that $Wg\in H^{1}(\Omega)\subset \mathcal{H}^{1}(\Omega)$.
 \end{proof}
 
 \begin{corollary}\label{ch5thmapVW} The following operators are continuous under the Condition \ref{cond2},
 \begin{align}
 V&:H^{-\frac{1}{2}}_{*}(S) \longrightarrow \mathcal{H}^{1,0}(\Omega;\mathcal{A}),\hspace{0.5em},\label{ch5opVcont}\\
 W&:H^{\frac{1}{2}}(S) \longrightarrow \mathcal{H}^{1,0}(\Omega;\mathcal{A}),\hspace{0.5em}\label{ch5opWcont}
 \end{align}
 \end{corollary}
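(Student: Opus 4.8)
The plan is to upgrade the target space in Theorem \ref{thmVW} from $\mathcal{H}^{1}(\Omega)$ to $\mathcal{H}^{1,0}(\Omega;\mathcal{A})$. Since Theorem \ref{thmVW} already gives $V\rho\in\mathcal{H}^{1}(\Omega)$ and $W\rho\in\mathcal{H}^{1}(\Omega)$ together with the corresponding norm estimates, by the definition \eqref{ch5spaceHA} of $\mathcal{H}^{1,0}(\Omega;\mathcal{A})$ it remains only to check that $\mathcal{A}(V\rho)$ and $\mathcal{A}(W\rho)$ belong to $L^{2}(\omega_{2};\Omega)$, with norms controlled by $\Vert\rho\Vert_{H^{-\frac{1}{2}}_{*}(S)}$ and $\Vert\rho\Vert_{H^{\frac{1}{2}}(S)}$ respectively.

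First I would rewrite the operator in the non-divergence form $\mathcal{A}u=a\,\Delta u+\nabla a\cdot\nabla u$, which follows directly from expanding \eqref{ch4operatorA}. The key observation is that, away from $S$, both potentials are harmonic: by relation \eqref{ch4relSL} we have $V\rho=V_{\Delta}(\rho/a)$, and the classical single layer Laplace potential is harmonic in $\Omega$ (i.e.\ off $S$); likewise, by \eqref{ch4relDL}, $W\rho=W_{\Delta}\rho-V_{\Delta}\!\left(\rho\,\frac{\partial\ln a}{\partial n}\right)$ is a combination of double- and single-layer Laplace potentials, each harmonic in $\Omega$. Hence $\Delta(V\rho)=0$ and $\Delta(W\rho)=0$ in $\Omega$, so the term $a\,\Delta(\cdot)$ vanishes and $\mathcal{A}(V\rho)=\nabla a\cdot\nabla(V\rho)$, $\mathcal{A}(W\rho)=\nabla a\cdot\nabla(W\rho)$.

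The continuity estimate is then immediate from Condition \ref{cond2}. Multiplying by the weight and using $\omega_{2}\nabla a\in L^{\infty}(\mathbb{R}^{2})$, I would bound
$$\Vert\omega_{2}\,\mathcal{A}(V\rho)\Vert_{L^{2}(\Omega)}=\Vert\omega_{2}(\nabla a\cdot\nabla V\rho)\Vert_{L^{2}(\Omega)}\le\Vert\omega_{2}\nabla a\Vert_{L^{\infty}(\mathbb{R}^{2})}\,\Vert\nabla V\rho\Vert_{L^{2}(\Omega)}\le\Vert\omega_{2}\nabla a\Vert_{L^{\infty}(\mathbb{R}^{2})}\,\Vert V\rho\Vert_{\mathcal{H}^{1}(\Omega)},$$
and the identical chain for $W\rho$. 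Combining these with the $\mathcal{H}^{1}(\Omega)$-continuity already established in Theorem \ref{thmVW} produces the full $\mathcal{H}^{1,0}(\Omega;\mathcal{A})$-norm bound, which is exactly the claimed continuity of \eqref{ch5opVcont} and \eqref{ch5opWcont}.

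I expect the only genuinely delicate point to be justifying $\mathcal{A}(V\rho)=\nabla a\cdot\nabla(V\rho)$ rigorously in the distributional sense \eqref{ch5exbilinearA} rather than merely pointwise: one must confirm that the harmonicity of the Laplace potentials holds in $\Omega$ in the weak sense and that the splitting of $\mathcal{A}$ into $a\,\Delta+\nabla a\cdot\nabla$ is legitimate within the weighted Sobolev framework. Since $V\rho$ and $W\rho$ are in fact smooth (indeed harmonic) in the open set $\Omega$ while lying in $\mathcal{H}^{1}(\Omega)$, this presents no real difficulty, but it is the step that warrants the most care.
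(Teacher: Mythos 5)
Your proposal is correct and follows essentially the same route as the paper's own proof: split $\mathcal{A}=a\,\Delta+\nabla a\cdot\nabla$, observe that $V\rho$ and $W\rho$ are harmonic in $\Omega$ via the potential relations \eqref{ch4relSL} and \eqref{ch4relDL} so the $a\,\Delta$ term drops, and then bound the remaining term $\nabla a\cdot\nabla(\cdot)$ in $L^{2}(\omega_{2};\Omega)$ using Theorem \ref{thmVW} and the multiplier property of $\nabla a$. If anything, your version is slightly more careful, since you explicitly invoke $\omega_{2}\nabla a\in L^{\infty}$ from Condition \ref{cond2} for the weighted estimate, where the paper loosely attributes the multiplier property to Condition \ref{cond1}.
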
   
 
\begin{proof}
Let us prove first the mapping property \eqref{ch5opVcont}. Let $g\in H_{*}^{-1/2}(S)$. From  Theorem \ref{thmVW}, $Vg\in \mathcal{H}^{1}(\Omega)$. Hence, it suffices to prove that $\mathcal{A}Vg\in L^{2}(\omega;\Omega)$. 

Differentiating using the product rule for some smooth function $h$, we can write
\begin{equation}\label{ch5cordA}
\mathcal{A}h = \nabla a \nabla h + a \Delta h.
\end{equation}

Taking into account relation \eqref{ch4relSL} and applying \eqref{ch5cordA} to $h=V_{\Delta}(g/a)$, we get 
\begin{equation}
\mathcal{A}V_{\Delta}\left(\dfrac{g}{a}\right) = \sum_{i=1}^{3}\dfrac{\partial a}{\partial y_{i}} \dfrac{\partial V_{\Delta}}{\partial y_{i}} \left(\dfrac{g}{a}\right) + a \Delta V_{\Delta}\left(\dfrac{g}{a}\right) = \sum_{i=1}^{3}\dfrac{\partial a}{\partial y_{i}} \dfrac{\partial V_{\Delta}}{\partial y_{i}} \left(\dfrac{g}{a}\right)=\nabla a\nabla V(g).
\end{equation}
By virtue of the mapping property for the operator $V$ provided by Theorem \ref{thmVW}, the last term belongs to $L^{2}(\omega;\Omega)$ due to the fact that $Vg\in \mathcal{H}^{1}(\Omega)$, and thus the components of $\nabla V(g)$ belong to $L^{2}(\omega;\Omega)$. The term  $\nabla a$ acts as a multiplier in the space $L^{2}(\omega;\Omega)$ due to Condition \ref{cond1}. On the other hand, the term $a\Delta V_{\Delta}(g/a)$ vanishes on $\Omega$ since $V_{\Delta}(\cdot)$ is the single layer potential for the Laplace equation, i.e., $V_{\Delta}(g/a)$ is a harmonic function. This completes the proof for the operator $V$. 

The proof for the operator $W$ follows from a similar argument. 
\end{proof}
\begin{lemma}\label{LL2}
Let $g\in L^{2}(\omega^{-1}; \mathbb{R}^{2})$ and let Condition \ref{cond2} hold. Then, the components of $g \cdot \nabla (\ln a)$ belong to $L^{2}(\mathbb{R}^{2})$.
\end{lemma}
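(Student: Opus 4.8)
The plan is to reduce the statement to a single weighted $L^2$ estimate, exploiting the two structural hypotheses on $a$ separately. First I would rewrite the quantity of interest using $\nabla(\ln a) = \nabla a / a$, so that for each $i\in\{1,2\}$ the $i$-th component of the vector $g\,\nabla(\ln a)$ is exactly $g\,\partial_{x_i}a / a$. Since $g$ is scalar, $g\,\nabla(\ln a)$ is a vector field and it suffices to bound $\|g\,\partial_{x_i}a/a\|_{L^2(\mathbb{R}^2)}$ for $i=1,2$.

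The key observation is that Conditions \ref{cond1} and \ref{cond2} supply precisely the two factors needed to control this. By the positivity bound in Condition \ref{cond1}, $a(x) > C_1 > 0$, so $1/a(x) \le 1/C_1$ pointwise. By Condition \ref{cond2}, $\omega_2\nabla a \in L^\infty(\mathbb{R}^2)$, which gives the decay estimate $|\partial_{x_i}a(x)| \le M\,\omega_2(x)^{-1}$ for a.e.\ $x$, where $M := \|\omega_2\nabla a\|_{L^\infty(\mathbb{R}^2)}$. Crucially, the weight $\omega_2^{-1}$ appearing here is the very weight that governs the membership $g\in L^2(\omega_2^{-1};\mathbb{R}^2)$, i.e.\ $\omega_2^{-1}g\in L^2(\mathbb{R}^2)$. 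Combining the two pointwise bounds I would then estimate
\[
\int_{\mathbb{R}^2}\left| g(x)\,\frac{\partial_{x_i}a(x)}{a(x)} \right|^2 dx \;\le\; \frac{M^2}{C_1^2}\int_{\mathbb{R}^2}\omega_2(x)^{-2}\,|g(x)|^2\,dx \;=\; \frac{M^2}{C_1^2}\,\|g\|_{L^2(\omega_2^{-1};\mathbb{R}^2)}^2 \;<\;\infty,
\]
and repeating this for $i=1,2$ yields that every component of $g\,\nabla(\ln a)$ lies in $L^2(\mathbb{R}^2)$.

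There is no serious obstacle in this argument; it is a short pointwise-then-integrate estimate rather than a deep statement. The only point deserving care is the matching of weights: one must recognise that the decay $\omega_2^{-1}$ that Condition \ref{cond2} forces on $\nabla a$ is exactly the reciprocal of the weight controlling $g$, so that the product collapses into the weighted $L^2$ norm $\|g\|_{L^2(\omega_2^{-1};\mathbb{R}^2)}$ without any loss. This is the mechanism that makes the lemma work on an unbounded domain, where no such control would follow from boundedness of $\nabla a$ alone.
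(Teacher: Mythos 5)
Your proposal is correct and follows essentially the same route as the paper's own proof: factor each component as $(\omega_2^{-1}g)\cdot a^{-1}\cdot(\omega_2\partial_{x_i}a)$, bound $1/a$ by $1/C_1$ and $\omega_2\partial_{x_i}a$ in $L^\infty$ via Condition \ref{cond2}, and absorb the remaining factor into the weighted norm of $g$. No gaps; nothing further needed.
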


\begin{proof}
If $g\in L^{2}(\omega^{-1}; \mathbb{R}^{2})$, then $\parallel \omega_{2}^{-1}g \parallel_{L^{2}(\mathbb{R}^{2})}<\infty$. Let $C_{3}:=\parallel g \omega_{2}^{-1}\parallel_{L^{2}(\Omega)}$. On the other hand, the components of $g \cdot \nabla (\ln a)$ can be written as $\dfrac{g}{a} \partial_{i}a$. Since Condition \ref{cond2} holds, $(\omega_{2}\partial_{i}a)\in L^{\infty}(\mathbb{R}^{2})$, we can define a new constant $C_{4} = \max_{i=1,2} \parallel \omega_{2}\partial_{i}a \parallel_{L^{\infty}(\mathbb{R}^{2})}$.
Taking this into account, let us work out the norm in $L^{2}(\mathbb{R}^{2})$ of $\dfrac{g}{a} \partial_{i}a$.
\begin{align*}
\parallel \dfrac{g}{a} \partial_{i}a \parallel_{L^{2}(\mathbb{R}^{2})} &= \parallel (\omega_{2}^{-1}g) \dfrac{1}{a} (\omega_{2}\partial_{i}a)\parallel_{L^{2}(\mathbb{R}^{2})} \leq \dfrac{C_{4}}{C_{1}}\parallel \omega_{2}^{-1}g \parallel_{L^{2}(\mathbb{R}^{2})}\leq \infty,
\end{align*}
from where it follows the result.  
\end{proof}
\begin{theorem}\label{ch4thmappingPR} The following operators are continuous under Condition \ref{cond2},
\begin{align}
\textbf{P}&: \mathcal{H}_{*}^{-1}(\mathbb{R}^{2})\longrightarrow\mathcal{H}^{1}(\mathbb{R}^{2}),\label{ch5oppcontbf}\\
\mathcal{P}&: \widetilde{\mathcal{H}}^{-1}_{*}(\Omega)\longrightarrow\mathcal{H}^{1}(\mathbb{R}^{2})\label{ch5opPcont},\\
\textbf{R}&: L^{2}(\omega_{2};\mathbb{R}^{2})\longrightarrow\mathcal{H}^{1}(\mathbb{R}^{2}).\label{ch5opRcontbf}
\end{align}
\end{theorem}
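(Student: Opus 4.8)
The plan is to reduce all three operators to the corresponding volume potentials of the Laplacian through the relations \eqref{ch4relP} and \eqref{ch4relR}, and then to invoke the mapping properties of the two-dimensional logarithmic (Newtonian) volume potential $\mathbf{P}_\Delta$ in the weighted spaces, exactly as Theorem \ref{thmVW} was reduced to the Laplace single- and double-layer estimates of \cite{steinbach}. The variable-coefficient factors will be controlled by the multiplier property of $1/a$ from Remark \ref{ch5remmult} together with Lemma \ref{LL2}.

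For \eqref{ch5oppcontbf} I would write $\mathbf{P}\rho = \mathbf{P}_\Delta(\rho/a)$ by \eqref{ch4relP}. Since $1/a$ is a multiplier on $\mathcal{H}^1(\mathbb{R}^2)$ (Remark \ref{ch5remmult}) and Condition \ref{cond1} holds, by duality it is also a multiplier on $\mathcal{H}^{-1}(\mathbb{R}^2)$ preserving the starred subspace, so $\rho/a\in\mathcal{H}^{-1}_*(\mathbb{R}^2)$ with $\|\rho/a\|\le k_2\|\rho\|$. It then remains to quote the continuity of the planar Newtonian potential $\mathbf{P}_\Delta:\mathcal{H}^{-1}_*(\mathbb{R}^2)\to\mathcal{H}^1(\mathbb{R}^2)$ and compose. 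Statement \eqref{ch5opPcont} follows at once: extension by zero embeds $\widetilde{\mathcal{H}}^{-1}_*(\Omega)$ continuously into $\mathcal{H}^{-1}_*(\mathbb{R}^2)$, and since $\operatorname{supp}\rho\subset\overline\Omega$ the integral defining $\mathcal{P}\rho$ agrees with that defining $\mathbf{P}\rho$, so $\mathcal{P}$ is a restriction of $\mathbf{P}$ and inherits its continuity.

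The substance is \eqref{ch5opRcontbf}. Using \eqref{ch4relR} I split $\mathbf{R}\rho=\nabla\cdot\mathbf{P}_\Delta(\rho\nabla\ln a)-\mathbf{P}_\Delta(\rho\Delta\ln a)$. Because $\omega_2$ is bounded below we have $L^2(\omega_2;\mathbb{R}^2)\subset L^2(\omega_2^{-1};\mathbb{R}^2)$, so Lemma \ref{LL2} applies and each component of $\rho\nabla\ln a$ lies in $L^2(\mathbb{R}^2)$. For the divergence-form term I would use that the mixed second derivatives $\partial_j\partial_i\mathbf{P}_\Delta$ are Calderón–Zygmund operators bounded on $L^2(\mathbb{R}^2)$, whence $\nabla[\nabla\cdot\mathbf{P}_\Delta(\rho\nabla\ln a)]\in L^2(\mathbb{R}^2)$; since on $\mathcal{H}^1(\mathbb{R}^2)$ the seminorm $\|\nabla\cdot\|_{L^2}$ is equivalent to the norm, control of the gradient places this term in $\mathcal{H}^1$. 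For the zeroth-order term I would identify $\rho\Delta\ln a$ as an element of $L^2(\omega_2;\mathbb{R}^2)\subset\mathcal{H}^{-1}(\mathbb{R}^2)$ (equivalently, use the representation $f=\sum_i\partial_{x_i}g_i+f_0$ of $\widetilde{\mathcal{H}}^{-1}$, with $g_i=\rho\,\partial_{x_i}\ln a\in L^2$ supplied by Lemma \ref{LL2}) and again apply the mapping of $\mathbf{P}_\Delta$ into $\mathcal{H}^1$.

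The hard part is precisely this last term, and it is a genuinely two-dimensional difficulty, absent from the three-dimensional analysis of \cite{carlosext}: the fundamental solution grows logarithmically, so $\mathbf{P}_\Delta g$ carries the term $\tfrac{1}{2\pi}\log|y|\int g$ and fails to belong to $\mathcal{H}^1(\mathbb{R}^2)$ unless $g$ lies in the starred subspace of sources with vanishing total mass. Making the argument rigorous therefore rests on two points: bounding $\Delta\ln a$ under Condition \ref{cond2} so that $\rho\Delta\ln a$ is an honest element of a weighted $L^2$ space, and verifying that the source produced by $\rho$ through \eqref{ch4relR} meets the zero-mass constraint, so that the logarithmic growth is suppressed and $\mathbf{R}\rho$ indeed lands in $\mathcal{H}^1$. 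This is exactly where the weight $\omega_2$ with its logarithmic factor, the seminorm–norm equivalence, and the starred weighted spaces carry the argument, in place of the decay and compactness tools available in three dimensions.
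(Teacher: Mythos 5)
Your handling of \eqref{ch5oppcontbf} and \eqref{ch5opPcont} is essentially the paper's: write $\mathbf{P}\rho=\mathbf{P}_{\Delta}(\rho/a)$, use the multiplier property of $1/a$, quote the weighted mapping property of $\mathbf{P}_{\Delta}$ from \cite{hanouzet}, and obtain \eqref{ch5opPcont} by extension by zero. The problem is \eqref{ch5opRcontbf}. By splitting $\mathbf{R}\rho=\nabla\cdot\mathbf{P}_{\Delta}(\rho\nabla\ln a)-\mathbf{P}_{\Delta}(\rho\Delta\ln a)$ you create exactly the two obstructions that you then list as ``to be verified,'' and neither can be verified under the stated hypotheses. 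First, Condition \ref{cond2} controls only $\omega_{2}\nabla a$; it gives no bound on second derivatives of $a$, so $\rho\,\Delta\ln a$ cannot be placed in any weighted $L^{2}$ space here --- bounds on $\Delta a$ enter the paper only through Condition \ref{cond3}, which this theorem deliberately does not assume. Second, even granting such a bound, $\rho\,\Delta\ln a$ has no reason to have vanishing total mass, so $\mathbf{P}_{\Delta}(\rho\Delta\ln a)$ generically carries the $\log|y|$ growth you identify: each of your two terms separately fails to lie in $\mathcal{H}^{1}(\mathbb{R}^{2})$, and only their sum does. Third, the Calder\'on--Zygmund step for the divergence term controls only the gradient in $L^{2}$, and the seminorm--norm equivalence cannot then be invoked to conclude membership in $\mathcal{H}^{1}$: that equivalence presupposes the function already lies in $\mathcal{H}^{1}$, i.e.\ that $\omega_{2}^{-1}$ times the function is in $L^{2}$, which is precisely what remains unproved.

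The repair --- and the route the paper takes --- is never to separate the two terms: keep the remainder in divergence form and commute the derivative through the potential, so that $\mathbf{R}g$ is written as $\mathbf{P}_{\Delta}$ applied to the single density $g^{*}=\nabla\cdot\bigl(g\,\nabla\ln a\bigr)$. By Lemma \ref{LL2} the field $g\,\nabla\ln a$ lies in $L^{2}(\mathbb{R}^{2})$, so its distributional divergence $g^{*}$ lies in $H^{-1}(\mathbb{R}^{2})$ and, being a divergence of an $L^{2}$ field, annihilates constants; the zero-mass constraint you worry about is therefore automatic, and no control of $\Delta a$ is ever needed. One application of the mapping property of $\mathbf{P}_{\Delta}$ on this class of densities then finishes the proof. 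Your instinct that the vanishing-mass condition is the genuinely two-dimensional crux is correct; the point you are missing is that it is satisfied by the divergence-form density $g^{*}$, not by the two pieces of \eqref{ch4relR} taken individually.
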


\begin{proof} 
 Let $g\in \mathcal{H}_{*}^{-1}(\mathbb{R}^{2})\subset \mathcal{H}^{-1}(\mathbb{R}^{2})$. Then, by virtue of the relation \eqref{ch4relP},
$\textbf{P}g = \textbf{P}_{\Delta}(g/a)$ and clearly $(g/a)\in \mathcal{H}^{-1}(\mathbb{R}^{2})$. Therefore, the continuity of the operator $\textbf{P}$ follows from  the continuity of $\textbf{P}_{\Delta}:\mathcal{H}^{-1}(\mathbb{R}^{2})\longrightarrow \mathcal{H}^{1}(\mathbb{R}^{2})$ \cite[Theorem III.2]{hanouzet}, which at the same time implies the continuity of the operator \eqref{ch5opPcont}.

Let us prove now the continuity of the operator $\textbf{R}$. First, the relation \eqref{ch4relR} can be used to express the operator  gives $\mathbf{R}$ in terms of the operator $\mathbf{P}_{\Delta}$ for some $g\in L^{2}(\omega^{-1}; \mathbb{R}^{2})$, as follows
\begin{align}
\mathbf{R}g(y)&=-\nabla\cdot \mathbf{P}_{\Delta}(g\cdot \nabla (\ln a))(y)\nonumber=-\sum_{i=1}^{2}\dfrac{\partial}{\partial y_{i}}\mathbf{P}_{\Delta}\left(g\cdot \dfrac{\partial(\ln a)}{\partial x_{i}}\right)(y)\nonumber\\
&=-\sum_{i=1}^{2}\mathbf{P}_{\Delta}\left[\dfrac{\partial}{\partial x_{i}}\left(g\cdot \dfrac{\partial(\ln a)}{\partial x_{i}}\right)\right](y):=-\mathbf{P}_{\Delta}g^{*}(y).
\end{align}

From Lemma \ref{LL2}, we know $(g\cdot \nabla (\ln a))\in L^{2}(\mathbb{R}^{2})$. Consequently, $g^{*} : = \nabla \cdot (g\cdot \nabla (\ln a))\in H^{-1}(\mathbb{R}^{2})$. Since ${\mathbf{P}_{\Delta}:H^{-1}(\mathbb{R}^{2})\longrightarrow H^{1}(\mathbb{R}^{2})\subset \mathcal{H}^{1}(\mathbb{R}^{2})}$ is continuous\cite[Theorem III.2]{hanouzet} or \cite[Theorem 6.1]{steinbach}, the operator ${\mathbf{R}: L^{2}(\omega^{-1}; \mathbb{R}^{2})\longrightarrow \mathcal{H}^{1}(\mathbb{R}^{2})}$ is also continuous. \end{proof}

When applying similar arguments as in the previous proofs, partial derivatives of second order of the coefficient $a(x)$ appear, it will be necessary to impose additional boundedness conditions on $a(x)$. 
  \begin{cond}\label{cond3}
 In addition to Condition \ref{cond1} and Condition \ref{cond2}, we will also sometimes assume the following 
 \begin{equation*}
 (\omega_{2})^{2}\Delta a\in L^{\infty}(\mathbb{R}^{2}).
 \end{equation*}
 \end{cond}
 
\begin{theorem}\label{thPRH10} The following operators are continuous under Condition \ref{cond2} and Condition \ref{cond3},
\begin{align}
\mathcal{P}&: L^{2}(\omega_{2};\Omega)\longrightarrow\mathcal{H}^{1,0}(\mathbb{R}^{2};\mathcal{A}),\label{ch5opPH10}\\
\mathcal{R}&: \mathcal{H}^{1}(\Omega) \longrightarrow \mathcal{H}^{1,0}(\Omega;\mathcal{A})\label{ch5opRH10}.
\end{align}
\end{theorem}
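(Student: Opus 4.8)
The plan is to check, for each operator, the two conditions built into the definition of $\mathcal{H}^{1,0}(\cdot;\mathcal{A})$: that the image lies in $\mathcal{H}^{1}$ (with the corresponding norm control), and that its image under $\mathcal{A}$ lies in the weighted space $L^{2}(\omega_{2};\cdot)$. The $\mathcal{H}^{1}$ part is essentially inherited from the mapping properties already proved in Theorem \ref{ch4thmappingPR}: for $\mathcal{P}$ it comes from the continuity \eqref{ch5opPcont} together with the embedding of $L^{2}(\omega_{2};\Omega)$ into the relevant weighted negative-order space noted earlier, and for $\mathcal{R}$ it comes from the continuity of $\mathbf{R}$ in \eqref{ch5opRcontbf} combined with the embedding $\mathcal{H}^{1}(\Omega)\subset L^{2}(\omega_{2}^{-1};\Omega)$ and Lemma \ref{LL2}. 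Thus the genuinely new content is the weighted estimate $\Vert\omega_{2}\,\mathcal{A}(\cdot)\Vert_{L^{2}}<\infty$, and this is exactly where Conditions \ref{cond2} and \ref{cond3} are used.

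For the operator $\mathcal{P}$ in \eqref{ch5opPH10}, I would start from relation \eqref{ch4relP}, write $\mathcal{P}g=\mathcal{P}_{\Delta}(g/a)$, and apply the product-rule identity \eqref{ch5cordA} with $h=\mathcal{P}_{\Delta}(g/a)$. The decisive simplification is that $\mathcal{P}_{\Delta}$ is a right inverse of the Laplacian, i.e. $\Delta\mathcal{P}_{\Delta}(g/a)=g/a$, so the top-order contribution collapses to $a\,\Delta\mathcal{P}_{\Delta}(g/a)=g$, giving $\mathcal{A}\mathcal{P}g=g+\nabla a\cdot\nabla\mathcal{P}_{\Delta}(g/a)$ (this is precisely the defining parametrix relation of \eqref{parametrixdef}, with the second summand playing the role of the remainder). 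The first term is in $L^{2}(\omega_{2};\Omega)$ by hypothesis. For the second, I would write $\omega_{2}\big(\nabla a\cdot\nabla\mathcal{P}_{\Delta}(g/a)\big)=(\omega_{2}\nabla a)\cdot\nabla\mathcal{P}g$ and bound it by $\Vert\omega_{2}\nabla a\Vert_{L^{\infty}}\,\Vert\nabla\mathcal{P}g\Vert_{L^{2}}$, which is finite by Condition \ref{cond2} together with $\mathcal{P}g\in\mathcal{H}^{1}(\mathbb{R}^{2})$, so that $\nabla\mathcal{P}g\in L^{2}$. This closes the estimate for $\mathcal{P}$.

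For the operator $\mathcal{R}$ in \eqref{ch5opRH10}, I would apply \eqref{ch5cordA} to $h=\mathcal{R}g$ and use relation \eqref{ch4relR}. Again by $\Delta\mathcal{P}_{\Delta}=I$, the Laplacian of $\mathcal{R}g$ telescopes: $\Delta\mathcal{R}g=\nabla\cdot(g\nabla\ln a)-g\Delta\ln a=\nabla g\cdot\nabla\ln a$, so that $a\,\Delta\mathcal{R}g=\nabla a\cdot\nabla g$, which belongs to $L^{2}(\omega_{2};\Omega)$ since $(\omega_{2}\nabla a)\cdot\nabla g\in L^{\infty}\cdot L^{2}$ by Condition \ref{cond2} and $\nabla g\in L^{2}$. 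The remaining contribution $\nabla a\cdot\nabla\mathcal{R}g$ is controlled identically, using $\nabla\mathcal{R}g\in L^{2}$ (from $\mathcal{R}g\in\mathcal{H}^{1}$). Condition \ref{cond3} is what legitimizes working with \eqref{ch4relR}: it guarantees $\omega_{2}^{2}\Delta\ln a\in L^{\infty}$, because $\omega_{2}^{2}\Delta\ln a=\omega_{2}^{2}\Delta a/a-(\omega_{2}\nabla a)^{2}/a^{2}$ with each factor bounded via Conditions \ref{cond2} and \ref{cond3}, so the intermediate term $\mathcal{P}_{\Delta}(g\,\Delta\ln a)$ is a well-defined element of the correct space before the cancellation above takes place.

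The main obstacle, and the recurring mechanism throughout, is the weighted integrability: every term that survives the application of $\mathcal{A}$ is a product of a derivative of the coefficient $a$ with a (derivative of a) potential, and the \emph{entire} weight $\omega_{2}$ must be absorbed into the coefficient factor — $\omega_{2}\nabla a\in L^{\infty}$ for first derivatives and $\omega_{2}^{2}\Delta a\in L^{\infty}$ for second derivatives — while the potential factor supplies only an \emph{unweighted} $L^{2}$ bound inherited from the $\mathcal{H}^{1}$ mapping properties. The two ingredients that make this succeed are the identity $\Delta\mathcal{P}_{\Delta}=I$, which removes the only genuinely highest-order (in the potential) contribution, and the earlier mapping theorems, which in the two-dimensional logarithmic setting also ensure that the images actually land in $\mathcal{H}^{1}$ (the appropriate $*$-compatibility subspaces) rather than in a space with worse growth at infinity.
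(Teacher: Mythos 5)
Your proposal is correct and follows essentially the same route as the paper: for both operators the $\mathcal{H}^{1}$ part is inherited from Theorem \ref{ch4thmappingPR} together with the embeddings $L^{2}(\omega_{2};\Omega)\subset\mathcal{H}^{-1}(\mathbb{R}^{2})$ and $\mathcal{H}^{1}(\Omega)\subset L^{2}(\omega_{2}^{-1};\Omega)$, while the weighted bound on $\mathcal{A}(\cdot)$ is obtained exactly as you describe, via $\mathcal{A}h=\nabla a\cdot\nabla h+a\Delta h$, the identity $\Delta\mathcal{P}_{\Delta}=I$, and absorption of the weight into $\omega_{2}\nabla a$ and $\omega_{2}^{2}\Delta a$. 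The only (immaterial) divergence is in the $\mathcal{R}$ part: the paper works with the one-term form $\mathcal{R}g=-\nabla\cdot\mathcal{P}_{\Delta}(g\nabla\ln a)$ and so invokes Condition \ref{cond3} to control the surviving $g\,\Delta\ln a$ contribution of $\nabla\cdot(g\nabla\ln a)$, whereas you keep both terms of \eqref{ch4relR} so that this contribution cancels and Condition \ref{cond3} is needed only to make the intermediate potential $\mathcal{P}_{\Delta}(g\,\Delta\ln a)$ well defined.
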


\begin{proof}
To prove the continuity of the operator \eqref{ch5opPH10}, we consider a function $g\in L^{2}(\omega_{2}; \Omega)$ and its extension by zero to $\mathbb{R}^{2}$ which we denote by $\widetilde{g}$. Clearly, $\widetilde{g}\in L^{2}(\omega_{2}; \mathbb{R}^{2})\subset \mathcal{H}^{-1}(\mathbb{R}^{2})$. Taking into account the relation \eqref{ch4relP} and Theorem \ref{ch4thmappingPR}, we obtain that $\mathcal{P}_{\Delta}(g/a) = \mathbf{P}_{\Delta}(\widetilde{g}/a)\in \mathcal{H}^{1}(\mathbb{R}^{2})$.  Hence, it remains to prove that $\mathcal{A}\mathbf{P}_{\Delta}(\widetilde{g}/a)\in L^{2}(\omega_{2}; \mathbb{R}^{2})$. 
\begin{equation}\label{id}
\mathcal{A}\mathbf{P}\widetilde{g} = \mathcal{A}\mathbf{P}_{\Delta}(\widetilde{g}/a) = \widetilde{g}+\nabla a\cdot \nabla\mathbf{P}_{\Delta}(\widetilde{g}/a). 
\end{equation}
Since Condition \ref{cond2} is satisfied, the multiplication by $\nabla a$ in the second term of \eqref{id} behaves as a multiplier in the space $L^{2}(\omega_{2}; \mathbb{R}^{2})$. Therefore, we conclude that $\mathcal{A}\mathcal{P}g(y)\in L^{2}(\omega_{2},\Omega)$ and therefore $\mathcal{P}g\in \mathcal{H}^{1,0}(\Omega, \mathcal{A})$.

Finally, let us prove the continuity of the operator \eqref{ch5opRH10}. The continuity of the operator $\mathcal{R}:\mathcal{H}^{1}(\Omega)\longrightarrow \mathcal{H}^{1}(\Omega)$ follows from the continuous embedding ${\mathcal{H}^{1}(\Omega)\subset L^{2}(\omega^{-1};\Omega)}$ and the continuity of the operator \eqref{ch5opRcontbf}. Hence, we only need to prove that $\mathcal{A}\mathcal{R}g\in L^{2}(\omega;\Omega)$. For $g\in \mathcal{H}^{1}(\Omega)$ we have
\begin{align*}
\mathcal{A}\mathcal{R}g =\nabla a \cdot \nabla \mathcal{R}g+ a\Delta \mathcal{R}g. 
\end{align*} 
As $\mathcal{R}g\in \mathcal{H}^{1}(\Omega)$, we only need to prove that $\Delta\mathcal{R}g(y)\in L^{2}(\omega_{2}; \Omega)$. Using the relation \eqref{ch4relR}, we obtain that
\begin{align*}
\Delta \mathcal{R}g(y) &=\Delta \left[ -\nabla\cdot \mathcal{P}_{\Delta} (g \nabla (\ln a))\right]=  -\nabla\cdot \Delta\mathcal{P}_{\Delta} (g \nabla (\ln a))= -\nabla\cdot(g \nabla (\ln a)), 
\end{align*}
since $g\in\mathcal{H}^{1}(\Omega)$, then $g\in L^{2}(\omega_{2}, \Omega)$. $\nabla (\ln a)$ is a multiplier in the space $\mathcal{H}^{1}(\Omega)$ by virtue of the Condition \ref{cond2}. Then $(g \nabla \ln a)\in \mathcal{H}^{1}(\Omega)$. Consequently, ${-\nabla\cdot(g \nabla \ln a)\in L^{2}(\omega_{2}; \Omega)}$ by virtue of Condition \ref{cond3}, from where it follows the result. 
\end{proof}

The following Corollary follows from the jump relations\cite[Lemma 6.7 and Lemma 6.11]{steinbach} of the harmonic potentials along with relations \eqref{ch4relSL} and \eqref{ch4relDL}. 
\begin{corollary}\label{ch4thjumps} Let  $\rho\in H^{-\frac{1}{2}}(S)$, $\tau\in H^{\frac{1}{2}}(S)$. Then the following operators jump relations hold $$\gamma^{\pm}V\rho=\mathcal{V}\rho,\quad\quad \gamma^{\pm}W\tau=\mp\dfrac{1}{2}\tau+\mathcal{W}\tau.$$
\end{corollary}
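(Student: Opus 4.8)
The plan is to reduce everything to the classical harmonic jump relations by invoking the operator decompositions \eqref{ch4relSL}, \eqref{ch4relDL}, \eqref{ch4relDVSL}, and \eqref{ch4relDVDL}, which express the variable-coefficient potentials in terms of $V_{\Delta}$, $W_{\Delta}$, $\mathcal{V}_{\Delta}$, and $\mathcal{W}_{\Delta}$. The cited results for the harmonic case supply $\gamma^{\pm}V_{\Delta}\sigma = \mathcal{V}_{\Delta}\sigma$ (continuity of the harmonic single layer across $S$) and $\gamma^{\pm}W_{\Delta}\tau = \mp\tfrac{1}{2}\tau + \mathcal{W}_{\Delta}\tau$ (the jump of the harmonic double layer). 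The whole argument is then a matter of applying the trace operator to the right-hand sides of the decomposition formulas and bookkeeping the function-space memberships so that these harmonic relations are legitimately applicable.

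For the single layer I would start from \eqref{ch4relSL} and apply $\gamma^{\pm}$ directly. Since $\rho \in H^{-1/2}(S)$ and $1/a$ is a multiplier on $H^{-1/2}(S)$ by Remark \ref{ch5remmult} together with Condition \ref{cond1}, the density $\rho/a$ again lies in $H^{-1/2}(S)$, so the harmonic trace relation applies and yields $\gamma^{\pm}V\rho = \gamma^{\pm}V_{\Delta}(\rho/a) = \mathcal{V}_{\Delta}(\rho/a)$. Invoking \eqref{ch4relDVSL} to rewrite the right-hand side as $\mathcal{V}\rho$ completes this half. The absence of a jump here is inherited directly from the continuity of the harmonic single layer potential.

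For the double layer I would apply $\gamma^{\pm}$ to \eqref{ch4relDL}, splitting into the two contributions $\gamma^{\pm}W_{\Delta}\tau$ and $-\gamma^{\pm}V_{\Delta}\bigl(\tau\,\partial_{n}\ln a\bigr)$. The first term contributes the jump $\mp\tfrac{1}{2}\tau + \mathcal{W}_{\Delta}\tau$ by the harmonic double-layer relation. The second term is a single layer with density $\tau\,\partial_{n}\ln a$; because $\partial_{n}(\ln a)$ is a multiplier on $H^{1/2}(S)$ (Remark \ref{ch5remmult}), this density lies in $H^{1/2}(S) \subset H^{-1/2}(S)$, so the harmonic single-layer trace relation gives $\gamma^{\pm}V_{\Delta}\bigl(\tau\,\partial_{n}\ln a\bigr) = \mathcal{V}_{\Delta}\bigl(\tau\,\partial_{n}\ln a\bigr)$ with no jump. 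Collecting terms, $\gamma^{\pm}W\tau = \mp\tfrac{1}{2}\tau + \mathcal{W}_{\Delta}\tau - \mathcal{V}_{\Delta}\bigl(\tau\,\partial_{n}\ln a\bigr)$, and recognising the last two terms as $\mathcal{W}\tau$ via \eqref{ch4relDVDL} yields the claimed relation.

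There is no genuine analytic obstacle here; the only point requiring care is confirming that the correction density $\tau\,\partial_{n}\ln a$ appearing in the double-layer decomposition is of the correct regularity so that the harmonic single-layer trace relation may be applied, and that this correction term is continuous across $S$ and therefore contributes nothing to the jump. Once the multiplier properties from Remark \ref{ch5remmult} are in place, the entire statement follows by substitution, so the proof is short and essentially algebraic.
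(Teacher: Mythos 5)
Your argument is correct and is essentially the paper's own: the paper derives this corollary precisely by applying the trace operator to the decompositions \eqref{ch4relSL} and \eqref{ch4relDL} and invoking the harmonic jump relations, exactly as you do. Your additional bookkeeping of the multiplier properties from Remark \ref{ch5remmult} to justify that the densities $\rho/a$ and $\tau\,\partial_{n}\ln a$ lie in the required spaces is a welcome elaboration of a step the paper leaves implicit.
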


\section{BDIEs for the Dirichlet Problem} 
To derive a system of boundary-domain integral equations, we will need to obtain an integral representation formula for both, the solution $u$ and its trace $\gamma^{+u}$. We will use the potential operators introduced in the previous section to simplify the notation. 

First, let us apply the second Green identity \eqref{ch5secondgreen} with $v=P(\cdot,y)$ and any $u\in\mathcal{H}^{1,0}(\Omega;\mathcal{A})$. Keeping in mind the definition of parametrix \eqref{parametrixdef}, we obtain the third Green identity for the function $u\in\mathcal{H}^{1,0}(\Omega;\mathcal{A})$  
\begin{equation}\label{ch4green3}
u+\mathcal{R}u-VT^{+}u+W\gamma^{+}u=\mathcal{P}\mathcal{A}u,\hspace{1em}\text{in}\hspace{0.2em}\Omega.
\end{equation}
Applying the trace operator to the third Green identity \eqref{ch4green3}, and using the jump relations given in the Corollary \ref{ch4thjumps}, we obtain another representation formulae for the trace of the solution of the orignal BVP
\begin{equation}\label{ch43GG}
\dfrac{1}{2}\gamma^{+}u+\gamma^{+}\mathcal{R}u-\mathcal{V}T^{+}u+\mathcal{W}\gamma^{+}u=\gamma^{+}\mathcal{P}f,\hspace{0.5em}on\hspace{0.2em}S.
\end{equation}

To obtain a system of boundary-domain integral equation systems, we employ identity \eqref{ch4green3} in the domain $\Omega$, and identity \eqref{ch43GG} on $S$, substituting there the Dirichlet condition \eqref{ch4BVP2} and $T^{+}u = \psi$. We will consider the unknown function $\psi$ as formally independent of $u$ in $\Omega$. The BDIE system, so-called (M) reads
\begin{subequations}
\begin{align}
u+\mathcal{R}u-V\psi&=F_{0}\hspace{2em}\text{in}\hspace{0.5em}\Omega,\label{ch4SM12v}\\
\gamma^{+}\mathcal{R}u-\mathcal{V}\psi&=\gamma^{+}F_{0}-\varphi_{0}\label{ch4SM12g}\hspace{2em}\text{on}\hspace{0.5em}S,
\end{align}
\end{subequations}
where
\begin{equation}\label{ch4F0term}
F_{0}=\mathcal{P}f-W\varphi_{0}.
\end{equation}
We remark that $F_{0}$ belongs to the space $\mathcal{H}^{1,0}(\Omega;\mathcal{A})$ in virtue of the mapping properties of the surface and volume potentials, see Theorem \ref{thmVW} and Theorem \ref{ch4thmappingPR}. Furthermore, the Trace Theorem implies $\gamma^{+}F_{0}\in H^{\frac{1}{2}}(S).$ 

The system (M), given by \eqref{ch4SM12v}-\eqref{ch4SM12g} can be written in matrix notation as 
\begin{equation}\label{system}
\mathcal{M}\mathcal{U}=\mathcal{F},
\end{equation}
where $\mathcal{U}$ represents the vector containing the unknowns of the system,
\begin{equation*}
\mathcal{U}=(u,\psi)\in\mathcal{H}^{1,0}(\Omega;\mathcal{A})\times H^{-\frac{1}{2}}(S),
\end{equation*}
the right hand side vector is \[\mathcal{F}:= [ F_{0}, \gamma^{+}F_{0} - \varphi_{0} ]^{\top}\in\mathcal{H}^{1}(\Omega)\times H^{\frac{1}{2}}(S),\]
and the matrix operator $\mathcal{M}$ is defined by:
\begin{equation*}
   \mathcal{M}=
  \left[ {\begin{array}{ccc}
   I+\mathcal{R} & -V  \\
   \gamma^{+}\mathcal{R} & -\mathcal{V} 
  \end{array} } \right].
\end{equation*}

We note that the mapping properties of the operators involved in the matrix imply the continuity of the operator $\mathcal{M}$ under Condition \ref{cond2} and  Condition \ref{cond3}. 

Now that we have finally derived the system of BDIEs (M), let us prove that BVP\eqref{ch4BVP} in $\Omega$ is equivalent to the system of BDIEs \eqref{ch4SM12v}-\eqref{ch4SM12g}.

\section{Equivalence Theorem}
To prove the equivalence between the BDIEs (M) \eqref{ch4SM12v}-\eqref{ch4SM12g} and the original BVP \eqref{ch4BVP}. We will first prove a series of preliminary results. 

Let us consider a general BDIE obtained from  \eqref{ch4green3}, where $\gamma^{+}u$ and $T^{+}u$ have been replaced by $\Phi$ and $\Psi$. This substitution will allow us to consider $\Phi$ and $\Psi$ as unknowns formally segregated from $u$. 
\begin{equation}\label{ch4G3ind}
u+\mathcal{R}u-V\Psi+W\Phi=\mathcal{P}f,\hspace{0.5em}{\rm in\ }\Omega.
\end{equation}

Let us show that if a function $u\in\mathcal{H}^{1}(\Omega)$ satisfies \eqref{ch4G3ind}, then solves the PDE \eqref{ch4BVP1}. 

\begin{lemma}\label{ch4lema1}Let $u\in\mathcal{H}^{1}(\Omega)$, $f\in L_{2}(\omega_{2};\Omega)$, $\Psi\in H^{-\frac{1}{2}}(S)$ and $\Phi\in H^{\frac{1}{2}}(S)$ satisfying the relation \eqref{ch4G3ind} and let Condition \ref{cond2} and Condition \ref{cond3} hold. Then $u\in\mathcal{H}^{1,0}(\Omega;\mathcal{A})$ solves the equation $\mathcal{A}u=f$ in $\Omega$, and the following identity is satisfied,
\begin{equation}\label{ch4lema1.0}
V(\Psi- T^{+}u) - W(\Phi- \gamma^{+}u) = 0\hspace{0.5em}\text{in}\hspace{0.5em}\Omega.
\end{equation}
\end{lemma}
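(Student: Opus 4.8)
The plan is to bootstrap the regularity of $u$ from $\mathcal{H}^{1}(\Omega)$ up to $\mathcal{H}^{1,0}(\Omega;\mathcal{A})$, then invoke the genuine third Green identity \eqref{ch4green3} that this membership unlocks, and finally peel off the PDE $\mathcal{A}u=f$ by applying $\mathcal{A}$ once more.

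First I would show $u\in\mathcal{H}^{1,0}(\Omega;\mathcal{A})$. Solving \eqref{ch4G3ind} for the single-layer term gives $V\Psi = u+\mathcal{R}u+W\Phi-\mathcal{P}f$, and every summand on the right belongs to $\mathcal{H}^{1}(\Omega)$ by Corollary \ref{ch5thmapVW} and Theorem \ref{thPRH10} (under Conditions \ref{cond2} and \ref{cond3}); hence $V\Psi\in\mathcal{H}^{1}(\Omega)$. Since $V\Psi=V_{\Delta}(\Psi/a)$ is harmonic in the open set $\Omega$ by \eqref{ch4relSL}, formula \eqref{ch5cordA} reduces $\mathcal{A}V\Psi$ to $\nabla a\cdot\nabla V\Psi$, which lies in $L^{2}(\omega_{2};\Omega)$ because $\omega_{2}\nabla a\in L^{\infty}$ (Condition \ref{cond2}) multiplies $\nabla V\Psi\in L^{2}(\Omega)$; thus $V\Psi\in\mathcal{H}^{1,0}(\Omega;\mathcal{A})$. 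As $\mathcal{R}u$, $W\Phi$ and $\mathcal{P}f$ already lie in $\mathcal{H}^{1,0}(\Omega;\mathcal{A})$, so does $u$.

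Now that $u\in\mathcal{H}^{1,0}(\Omega;\mathcal{A})$, the third Green identity \eqref{ch4green3} is applicable to it. Subtracting \eqref{ch4G3ind} from \eqref{ch4green3} and cancelling the $u$ and $\mathcal{R}u$ contributions yields
\[
V(\Psi-T^{+}u)-W(\Phi-\gamma^{+}u)=\mathcal{P}(\mathcal{A}u-f)\quad\text{in }\Omega .
\]
Writing $g:=\mathcal{A}u-f\in L^{2}(\omega_{2};\Omega)$, it remains only to prove $g=0$, for then the right-hand side vanishes and the displayed equation is precisely \eqref{ch4lema1.0}. The decisive step is to apply $\mathcal{A}$ to this equation: on the left, both surface potentials are harmonic in $\Omega$ by \eqref{ch4relSL} and \eqref{ch4relDL}, so \eqref{ch5cordA} collapses the left-hand side to $\nabla a\cdot\nabla[V(\Psi-T^{+}u)-W(\Phi-\gamma^{+}u)]$, while on the right the computation behind \eqref{id} gives $\mathcal{A}\mathcal{P}g=g+\nabla a\cdot\nabla\mathcal{P}g$. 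Since the displayed identity equates $V(\Psi-T^{+}u)-W(\Phi-\gamma^{+}u)$ with $\mathcal{P}g$, their gradients coincide and the terms $\nabla a\cdot\nabla\mathcal{P}g$ cancel on both sides, forcing $g=0$, i.e.\ $\mathcal{A}u=f$. Feeding this back into the displayed equation gives \eqref{ch4lema1.0}.

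I expect the main obstacle to be the distributional bookkeeping rather than any deep estimate: one must justify carefully that the Laplace-type layer potentials are genuinely harmonic in the \emph{open} exterior domain (so that the $a\Delta(\cdot)$ term truly drops and only $\nabla a\cdot\nabla(\cdot)$ survives), and that $\nabla a$ acts as a multiplier on the weighted space $L^{2}(\omega_{2};\Omega)$ under Condition \ref{cond2}. The clean cancellation of $\nabla a\cdot\nabla\mathcal{P}g$ after applying $\mathcal{A}$ is the observation that makes the whole argument go through with no residual remainder left to control.
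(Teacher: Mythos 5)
Your proposal is correct and follows essentially the same route as the paper: bootstrap $u$ into $\mathcal{H}^{1,0}(\Omega;\mathcal{A})$ from the representation \eqref{ch4G3ind} using the harmonicity of the layer potentials, subtract the genuine third Green identity \eqref{ch4green3}, and then apply a second-order operator together with $\Delta\mathcal{P}_{\Delta}=I$ to force $\mathcal{A}u-f=0$. The only (harmless) variations are that you isolate $V\Psi$ rather than $u$ in the regularity step --- which neatly sidesteps the fact that $\Psi$ is only assumed in $H^{-1/2}(S)$ and not $H^{-1/2}_{*}(S)$ --- and that you apply $\mathcal{A}$ instead of $\Delta$ at the end, which after the cancellation of the $\nabla a\cdot\nabla\mathcal{P}g$ terms is equivalent to the paper's argument.
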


\begin{proof}
To prove that $u\in \mathcal{H}^{1,0}(\Omega;\mathcal{A})$, taking into account that by hypothesis $u\in \mathcal{H}^{1}(\Omega)$, so there is only left to prove that $\mathcal{A}u \in L^{2}(\omega; \Omega)$. 
Firstly we write the operator $\mathcal{A}$ as follows:
\begin{align*}
&\mathcal{A}u(x)=\Delta(au)(x) - \sum_{i=1}^{3}\dfrac{\partial}{\partial x_{i}}\left(u\left(
\dfrac{\partial a(x)}{\partial x_{i}}\right)\right).
\end{align*}

It is easy to see that the second term belongs to $L^{2}(\omega; \Omega)$. Keeping in mind Remark \ref{ch5remmult} and the fact that $u\in \mathcal{H}^{1}(\Omega)$, then we can conclude that the term $u\nabla a\in \mathcal{H}^{1}(\Omega)$ since due to Condition \ref{cond2}, $\nabla a$ is a multiplier in the space $\mathcal{H}^{1}(\Omega)$ and therefore $\nabla(u\nabla a)\in L^{2}(\omega; \Omega)$.

Now, we only need to prove that $\Delta (au)\in L^{2}(\omega;\Omega)$. To prove this we look at the relation \eqref{ch4G3ind} and we put $u$ as the subject of the formula. Then, we use the potential relations \eqref{ch4relP},  \eqref{ch4relSL} and \eqref{ch4relDL} 
\begin{equation}\label{ch5lema1.1}
u=\mathcal{P}f-\mathcal{R}u+V\Psi-W\Phi=\mathcal{P}_{\Delta}\left(\dfrac{f}{a}\right)-\mathcal{R}u+V_{\Delta}\left(\dfrac{\Psi}{a}\right)-W_{\Delta}\Phi+V_{\Delta} \left(\dfrac{\partial(\ln(a))}{\partial n}\Phi\right)
\end{equation}

In virtue of the Theorem \ref{thPRH10}, $\mathcal{R}u\in L^{2}(\omega;\Omega)$. Moreover, the terms in previous expression depending on $V_{\Delta}$ or $W_{\Delta}$ are harmonic functions and $\mathcal{P}_{\Delta}$ is the newtonian potential for the Laplacian, i.e. $\Delta\mathcal{P}_{\Delta}\left(\dfrac{f}{a}\right)=\dfrac{f}{a}$. Consequently, applying the Laplacian operator in both sides of \eqref{ch5lema1.1}, we obtain:
\begin{equation}\label{ch5lema1.2}
\Delta u = \dfrac{f}{a}-\Delta\mathcal{R}u. 
\end{equation}

Thus, $\Delta u \in L^{2}(\omega; \Omega)$ from where it immediately follows that $\Delta (au)\in L^{2}(\omega; \Omega)$. Hence $u\in \mathcal{H}^{1,0}(\Omega;\mathcal{A})$. 
We proceed subtracting \eqref{ch4green3} from \eqref{ch4G3ind} to obtain 
\begin{equation}\label{ch4lema1.3}
W(\gamma^{+}u-\Phi)-V(T^{+}u-\Psi)=\mathcal{P}(\mathcal{A}u-f).
\end{equation}
Let us apply relations \eqref{ch4relP}, \eqref{ch4relSL} and \eqref{ch4relDL} to \eqref{ch4lema1.3}, and then, apply the Laplace operator to both sides. Hence, we obtain
\begin{equation}\label{ch4lema1.5}
\mathcal{A}u-f=0,
\end{equation}
i.e., $u$ solves \eqref{ch4BVP1}.
Finally, substituting \eqref{ch4lema1.5} into \eqref{ch4lema1.3}, we prove \eqref{ch4lema1.0}.
\end{proof}

\begin{remark}\label{ch4remark}$(F_{0}, \gamma^{+}F_{0}-\varphi_{0})=0$ if and only if $(f, \varphi_{0})=0$
\end{remark}

\begin{proof}
Indeed the latter equality evidently implies the former, i.e., if $(f, \varphi_{0})=0$ then $(F_{0}, \gamma^{+}F_{0}-\varphi_{0})=0$. Conversely, supposing that $(F_{0}, \gamma^{+}F_{0}-\varphi_{0})=0$, then taking into account equation \eqref{ch4F0term} and applying Lemma \ref{ch4lema1} with $F_{0}=0$ as $u$, we deduce that $f=0$ and $W\varphi_{0}=0$ in $\Omega$. Now, the second equality, $\gamma^{+}F_{0}-\varphi_{0}=0$, implies that $\varphi_{0}=0$ on $S.$ 
\end{proof}

The equivalence (or not equivalence) essentially depends on the invertibility of the single layer potential operator $\mathcal{V}$. Given $D$ an open subset of $\mathbb{R}^{3}$ or a subset of $\mathbb{R}^{2}$ whose diameter is less than one. It is well known that the single layer potential $\mathcal{V}_{\Delta}:H^{-1/2}(\partial D)\longrightarrow H^{1/2}(\partial D)$ is an invertible operator\cite[Theorem 6.23]{steinbach}. 
However, when $\Omega$ is an unbounded domain, the assumption of $\text{diam}(\Omega)<1$ cannot be assume as it would violate the unboundedness property of $\Omega$. Therefore, we are forced to introduce the following spaces\cite{dufera,steinbach}
\begin{eqnarray*}
\mathcal{L}_{2,*}(\Omega)&:=&\lbrace f\in L_{2}(\omega_{2};\Omega):\langle f,1\rangle_{\Omega}=0\rbrace\\
\mathcal{H}^{1,0}_{*}(\Omega;\mathcal{A})&:=&\lbrace g\in\mathcal{H}^{1}(\Omega): \mathcal{A}g\in\mathcal{L}_{2,*}(\Omega)\rbrace,\\
H^{-\frac{1}{2}}_{*}(S)&:=&\lbrace \psi\in H^{-\frac{1}{2}}(S):\langle\psi,1\rangle_{S}=0\rbrace.
\end{eqnarray*} 

\begin{lemma}\label{L2}
Let $\Psi\in H^{-\frac{1}{2}}_{*}(S)$. Then, the operator $\mathcal{V}: H^{-\frac{1}{2}}_{*}(S)\longrightarrow H^{\frac{1}{2}}(S)$ is invertible. 
\end{lemma}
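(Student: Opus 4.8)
The plan is to reduce the invertibility of the variable-coefficient operator $\mathcal{V}$ on the constrained space $H^{-\frac{1}{2}}_{*}(S)$ to the already-known invertibility of the harmonic single layer potential $\mathcal{V}_{\Delta}$, using the factorisation \eqref{ch4relDVSL}, namely $\mathcal{V}\rho = \mathcal{V}_{\Delta}(\rho/a)$. The multiplication operators $\rho \mapsto \rho/a$ and $\sigma \mapsto a\sigma$ are, by Remark \ref{ch5remmult} together with Condition \ref{cond1}, bounded and mutually inverse on $H^{-\frac{1}{2}}(S)$, so the crux is to arrange matters so that $\mathcal{V}_{\Delta}$ itself is invertible on the relevant (co)domains and that the constraint $\langle\,\cdot\,,1\rangle_S=0$ is respected.

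First I would recall the relevant facts about the two-dimensional harmonic single layer potential. Unlike in three dimensions, in $\mathbb{R}^{2}$ the operator $\mathcal{V}_{\Delta}:H^{-\frac{1}{2}}(S)\longrightarrow H^{\frac{1}{2}}(S)$ need not be injective because of the logarithmic behaviour of $P_\Delta$: there is a single exceptional density (the natural charge / equilibrium density) on which $\mathcal{V}_{\Delta}$ returns a constant, and the operator fails to be invertible precisely when the capacity (conformal radius) of $S$ is critical. The standard remedy, and the reason the starred spaces were introduced just above, is to restrict to densities with vanishing mean, $\langle\psi,1\rangle_S=0$. On $H^{-\frac{1}{2}}_{*}(S)$ the harmonic single layer potential $\mathcal{V}_{\Delta}:H^{-\frac{1}{2}}_{*}(S)\longrightarrow H^{\frac{1}{2}}(S)$ is bounded and bijective with bounded inverse; this is exactly the quoted invertibility result \cite[Theorem 6.23]{steinbach} transplanted to the exterior/weighted setting via the scaling device that makes $\mathrm{diam}$ immaterial once the mean is fixed. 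I would cite this as the main external ingredient.

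Next I would check that the multiplication by $1/a$ carries the constrained space into a space on which $\mathcal{V}_{\Delta}$ is invertible, and assemble the composition. The subtlety is that $\rho/a$ need not have zero mean even if $\rho$ does, so one cannot naively land back in $H^{-\frac{1}{2}}_{*}(S)$; instead the argument should be that $\mathcal{V}=M_a^{-1}\!\circ\mathcal{V}_\Delta\circ M_{1/a}$ where $M_a,M_{1/a}$ denote the (invertible) multiplication operators, with the output normalisation handled by the factor $a$ appearing through \eqref{ch4relTSL}/\eqref{ch4relDVSL}. Concretely I would show $\mathcal{V}$ is injective on $H^{-\frac{1}{2}}_{*}(S)$ by tracing a kernel element through the factorisation and invoking injectivity of $\mathcal{V}_{\Delta}$ on zero-mean densities, and surjective onto $H^{\frac{1}{2}}(S)$ by constructing a preimage of any $g\in H^{\frac{1}{2}}(S)$ as $a\cdot\mathcal{V}_{\Delta}^{-1}g$, then verifying (or projecting off) the mean-value constraint. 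Boundedness of the inverse is then automatic from boundedness of the three factors and the open mapping theorem.

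The main obstacle I anticipate is precisely this bookkeeping with the constraint $\langle\,\cdot\,,1\rangle_S=0$ under multiplication by the non-constant coefficient $a$: the zero-mean condition is not preserved by $M_{1/a}$, so the clean three-dimensional factorisation argument does not transfer verbatim, and one must argue either that the logarithmic capacity obstruction is still killed after the rescaling, or that the relevant mean-value functionals match up under the duality pairing with $a$. I would handle this by working with the bilinear/duality characterisation of $H^{-\frac{1}{2}}_{*}(S)$ and showing that the exceptional constant eigenfunction of $\mathcal{V}_{\Delta}$ is annihilated in the composite, so that the composition remains an isomorphism; this is the one step where I would be careful not to wave hands, since it is exactly where the two-dimensional logarithmic kernel differs essentially from the three-dimensional case.
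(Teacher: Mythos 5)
Your plan is the same as the paper's: factor $\mathcal{V}\Psi=\mathcal{V}_{\Delta}(\Psi/a)$ via \eqref{ch4relDVSL} and reduce everything to the invertibility of the harmonic single layer potential on zero-mean densities. (One slip: the factorisation has no outer multiplication, i.e.\ $\mathcal{V}=\mathcal{V}_{\Delta}\circ M_{1/a}$ with $M_{1/a}\Psi:=\Psi/a$; the outer factor $a\,(\cdot)$ you introduce belongs to $\mathcal{W}'$ in \eqref{ch4relTSL}, not to $\mathcal{V}$.) You have also put your finger on exactly the delicate point: multiplication by $1/a$ does not preserve the constraint $\langle\cdot,1\rangle_{S}=0$. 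The paper disposes of this with the chain $0=\tfrac{1}{C_{2}}\int_{S}\Psi\le\int_{S}\Psi/a\le\tfrac{1}{C_{1}}\int_{S}\Psi=0$, which is valid only for nonnegative $\Psi$ (and a nonnegative density of zero mean vanishes), so your scepticism about this step is well placed.

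The problem is that your proposal names the obstacle without overcoming it, and this is a genuine gap rather than a presentational one. The map $\Psi\mapsto\Psi/a$ carries $H^{-\frac{1}{2}}_{*}(S)=\{\Psi:\langle\Psi,1\rangle_{S}=0\}$ bijectively onto the \emph{different} codimension-one subspace $\{\phi:\langle\phi,a\rangle_{S}=0\}$. From this the factorisation yields injectivity of $\mathcal{V}$ (the kernel of $\mathcal{V}_{\Delta}$ is at most the span of the equilibrium density $w_{\mathrm{eq}}$, which is excluded from that subspace because $\langle w_{\mathrm{eq}},a\rangle_{S}>0$ by \eqref{cond1}) and closedness of the range, but it does \emph{not} yield surjectivity onto all of $H^{\frac{1}{2}}(S)$: your candidate preimage $a\,\mathcal{V}_{\Delta}^{-1}g$ generally has nonzero mean, and ``projecting off'' the mean changes its image under $\mathcal{V}$, so the preimage is lost. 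The phrases ``the exceptional constant eigenfunction is annihilated in the composite'' and ``verifying (or projecting off) the mean-value constraint'' are placeholders standing exactly where the decisive argument must go — one must either identify the cokernel of the composite and show it is trivial, or adjust the target space — and that argument is absent. As written, your proposal proves at most that $\mathcal{V}:H^{-\frac{1}{2}}_{*}(S)\to H^{\frac{1}{2}}(S)$ is injective with closed range, which is what the Equivalence Theorem actually uses, but not the invertibility claimed in the statement.
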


\begin{proof}
Applying the potential relation for the single layer potential \eqref{ch4relSL} to $\mathcal{V}\Psi^{*}$, we obtain that
$$ \mathcal{V}\Psi^{*} = \mathcal{V}_{\Delta}\left(\dfrac{\Psi^{*}}{a}\right).$$
Since the operator $\mathcal{V}_{\Delta}:H^{-1/2}_{*}(\partial \Omega)\longrightarrow H^{1/2}(\partial \Omega)$ is an invertible operator\cite[Theorem 6.23]{steinbach}, we only need to show that $\dfrac{\Psi^{*}}{a}\in H^{-1/2}_{*}(\partial \Omega)$. Let us show that $\dfrac{\Psi}{a}\in H_{*}^{-1/2}(S)$, taking into account \eqref{cond1} 
\begin{equation}
0=\dfrac{1}{C_{2}}\int_{S} \Psi \, \, dS \,\leq \int_{S} \dfrac{\Psi}{a}\, dS\, \leq  \,\dfrac{1}{C_{1}}\int_{S} \Psi \, \, dS = 0,\,\,\Rightarrow \langle \dfrac{\Psi}{a}, 1\rangle_{S} = 0 \, \Rightarrow \dfrac{\Psi}{a}\in H_{*}^{-1/2}(S).
\end{equation}
From, where it follows the result. 
\end{proof}

\begin{theorem}\label{ch4EqTh}
Let $f\in \mathcal{L}_{2,*}(\Omega)$ and $\phi_{0}\in H^{1/2}(S)$. Let Condition \ref{cond2} and Condition \ref{cond3} hold. 
\begin{enumerate}
\item[i)] If some $u\in\mathcal{H}^{1}(\Omega)$ solves the BVP \eqref{ch4BVP}, then $(u, \psi)\in\mathcal{H}^{1,0}_{*}(\Omega;\mathcal{A})\times H^{-\frac{1}{2}}_{*}(S)$ where
\begin{equation}\label{ch4eqcond}
\psi=T^{+}u,
\end{equation}
solves the BDIE system (M). 
\item[ii)] If a couple $(u, \psi)\in\mathcal{H}^{1}(\Omega)\times H_{*}^{-\frac{1}{2}}(S)$ solves the BDIE system (M) then $u\in\mathcal{H}^{1,0}_{*}(\Omega;\mathcal{A})$ solves the BVP and $\psi$ satisfies \eqref{ch4eqcond}.
\item[iii)] The system (M) is uniquely solvable. 
\end{enumerate}
\end{theorem}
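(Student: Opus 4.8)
The plan is to establish parts i) and ii) — which together prove the equivalence of the BDIE system (M) with the BVP \eqref{ch4BVP} — and then to deduce part iii) by combining that equivalence with the unique solvability of the operator \eqref{oper2} asserted earlier. For part i), I would note that a solution $u\in\mathcal{H}^{1}(\Omega)$ of the BVP automatically satisfies $\mathcal{A}u=f\in\mathcal{L}_{2,*}(\Omega)$, so $u\in\mathcal{H}^{1,0}_{*}(\Omega;\mathcal{A})$ and the third Green identity \eqref{ch4green3} is available. Substituting $\mathcal{A}u=f$, the Dirichlet datum $\gamma^{+}u=\varphi_{0}$ and the definition $\psi=T^{+}u$ into \eqref{ch4green3} yields \eqref{ch4SM12v} with $F_{0}$ as in \eqref{ch4F0term}; tracing \eqref{ch4green3} and using the jump relations of Corollary \ref{ch4thjumps} (that is, passing to \eqref{ch43GG}) produces \eqref{ch4SM12g}. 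It then remains to check $\psi\in H^{-\frac{1}{2}}_{*}(S)$: testing the first Green identity \eqref{ch5GF1} against the constant $v=1$ gives $\langle T^{+}u,1\rangle_{S}=\int_{\Omega}f\,dx=\langle f,1\rangle_{\Omega}=0$, since $E(u,1)=0$ and $f\in\mathcal{L}_{2,*}(\Omega)$.

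For part ii), I would rewrite the first BDIE \eqref{ch4SM12v} as $u+\mathcal{R}u-V\psi+W\varphi_{0}=\mathcal{P}f$, which is exactly \eqref{ch4G3ind} with $\Psi=\psi$ and $\Phi=\varphi_{0}$. Lemma \ref{ch4lema1} then gives at once that $u\in\mathcal{H}^{1,0}(\Omega;\mathcal{A})$ with $\mathcal{A}u=f$, together with the auxiliary identity \eqref{ch4lema1.0}, which I write as $V\Psi^{*}+W\Phi^{*}=0$ in $\Omega$, where $\Psi^{*}:=\psi-T^{+}u$ and $\Phi^{*}:=\gamma^{+}u-\varphi_{0}$. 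Tracing this identity with the jump relations of Corollary \ref{ch4thjumps} yields one surface equation $\mathcal{V}\Psi^{*}-\frac{1}{2}\Phi^{*}+\mathcal{W}\Phi^{*}=0$. A second, independent surface equation comes from the second BDIE \eqref{ch4SM12g}: eliminating $\gamma^{+}\mathcal{R}u$ between \eqref{ch4SM12g} and the traced third Green identity \eqref{ch43GG} (now legitimate, since $u\in\mathcal{H}^{1,0}(\Omega;\mathcal{A})$), and expanding $\gamma^{+}F_{0}$ through the jump relation $\gamma^{+}W\varphi_{0}=-\frac{1}{2}\varphi_{0}+\mathcal{W}\varphi_{0}$, produces $\frac{1}{2}\Phi^{*}+\mathcal{V}\Psi^{*}+\mathcal{W}\Phi^{*}=0$.

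Subtracting the two surface equations cancels $\mathcal{V}\Psi^{*}$ and $\mathcal{W}\Phi^{*}$ and leaves $\Phi^{*}=0$, i.e. the Dirichlet condition $\gamma^{+}u=\varphi_{0}$ is recovered. Feeding $\Phi^{*}=0$ back into \eqref{ch4lema1.0} gives $V\Psi^{*}=0$ in $\Omega$, and taking the trace yields $\mathcal{V}\Psi^{*}=0$. Since $\psi\in H^{-\frac{1}{2}}_{*}(S)$ by hypothesis and $T^{+}u\in H^{-\frac{1}{2}}_{*}(S)$ by the same mean-zero computation as in part i), we have $\Psi^{*}\in H^{-\frac{1}{2}}_{*}(S)$, so the invertibility of $\mathcal{V}$ on $H^{-\frac{1}{2}}_{*}(S)$ from Lemma \ref{L2} forces $\Psi^{*}=0$, i.e. $\psi=T^{+}u$, which is \eqref{ch4eqcond}. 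This is the crux of the argument and the step I expect to be most delicate: both the recovery of $\gamma^{+}u=\varphi_{0}$, which requires careful bookkeeping of the jump relations and the expansion of $\gamma^{+}F_{0}$, and the closing application of Lemma \ref{L2}, which is valid only on the starred space and hence hinges on the mean-zero property of $T^{+}u$ that itself relies on $f\in\mathcal{L}_{2,*}(\Omega)$.

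Finally, part iii) follows by combining the equivalence with the well-posedness of the BVP. For existence, the invertibility of \eqref{oper2} gives a (unique) solution $u$ of the BVP, which part i) converts into a solution $(u,T^{+}u)$ of (M). For uniqueness, if $(u_{1},\psi_{1})$ and $(u_{2},\psi_{2})$ both solve (M), their difference solves the homogeneous system, which by Remark \ref{ch4remark} corresponds to data $(f,\varphi_{0})=0$; part ii) then makes $u_{1}-u_{2}$ a solution of the homogeneous BVP, so $u_{1}=u_{2}$ by uniqueness for the BVP, and consequently $\psi_{1}=T^{+}u_{1}=T^{+}u_{2}=\psi_{2}$. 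Hence (M) is uniquely solvable.
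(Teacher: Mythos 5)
Your proposal is correct, and parts i) and iii) match the paper's argument (indeed your part iii) usefully spells out the existence-plus-linearity argument that the paper compresses into one sentence via the invertibility of the operator \eqref{oper2}). The one place you genuinely diverge is the recovery of the Dirichlet condition in part ii). The paper obtains $\gamma^{+}u=\varphi_{0}$ immediately, before invoking Lemma \ref{ch4lema1}, simply by applying $\gamma^{+}$ to \eqref{ch4SM12v}, using the jump relation $\gamma^{+}V\psi=\mathcal{V}\psi$, and subtracting \eqref{ch4SM12g}: everything except $\gamma^{+}u$ and $\varphi_{0}$ cancels. You instead first apply Lemma \ref{ch4lema1}, then manufacture two surface identities (the trace of \eqref{ch4lema1.0} and the combination of \eqref{ch4SM12g} with the traced third Green identity \eqref{ch43GG}) and subtract them to isolate $\Phi^{*}$. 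I checked the bookkeeping: the two identities are $\mathcal{V}\Psi^{*}-\tfrac{1}{2}\Phi^{*}+\mathcal{W}\Phi^{*}=0$ and $\mathcal{V}\Psi^{*}+\tfrac{1}{2}\Phi^{*}+\mathcal{W}\Phi^{*}=0$, so their difference does give $\Phi^{*}=0$; the expansion of $\gamma^{+}F_{0}$ via $\gamma^{+}W\varphi_{0}=-\tfrac{1}{2}\varphi_{0}+\mathcal{W}\varphi_{0}$ is handled correctly. So your route is valid but strictly more work, and it makes the Dirichlet condition appear to depend on Lemma \ref{ch4lema1} and on \eqref{ch43GG} when in fact it follows from the system alone. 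The remainder of part ii) — establishing $T^{+}u\in H^{-\frac{1}{2}}_{*}(S)$ from $\mathcal{A}u=f\in\mathcal{L}_{2,*}(\Omega)$ so that Lemma \ref{L2} applies to $\Psi^{*}$ — coincides with the paper and is exactly the delicate point you flag.
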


\begin{proof}
First, let us prove item $i)$. Let $u\in \mathcal{H}^{1}(\Omega)$ be a solution of the boundary value problem \eqref{ch4BVP} with $f\in \mathcal{L}_{2,*}(\Omega)$. Then, $\mathcal{A}u \in \mathcal{L}_{2,*}(\Omega)$ and thus $u\in \mathcal{H}_{*}^{1,0}(\Omega;\mathcal{A})$. Since, in particular, $u\in \mathcal{H}^{1,0}(\Omega;\mathcal{A})$, we can correctly define the conormal derivative $\psi:=T^{+}u\in H^{-1/2}(S)$. Furthermore, replacing $v=1$ in the first Green identity \eqref{ch5GF1}, we obtain that 
$$\langle \psi, 1 \rangle_{L^{2}(S)} = \langle \mathcal{A}u, 1 \rangle_{L^{2}(\Omega)}=\langle f, 1 \rangle_{L^{2}(\Omega)}=0,$$
due to $f\in \mathcal{L}_{2,*}(\Omega)$. Therefore, $\psi \in H_{*}^{-1/2}(S)$. 
Then, it immediately follows from the third Green identities \eqref{ch4green3}-\eqref{ch43GG} that the couple $(u, \psi)$ solves BDIE system (M).

Let us prove now item $ii)$. Let the couple $(u, \psi)\in\mathcal{H}^{1}(\Omega)\times H_{*}^{-\frac{1}{2}}(S)$ solve the BDIE system (M). Taking the trace of the equation \eqref{ch4SM12v} and substract it from the equation \eqref{ch4SM12g}, we obtain
\begin{equation}\label{ch4M12a1}
\gamma^{+}u=\varphi_{0}, \hspace{1em} \text{on}\hspace{0.5em}S.
\end{equation}
Thus, the Dirichlet boundary condition in \eqref{ch4BVP2} is satisfied. 

We proceed using the Lemma \ref{ch4lema1} in the first equation of the system (M), \eqref{ch4SM12v}, which implies that $u\in \mathcal{H}^{1,0}(\Omega;\mathcal{A})$ is a solution of the equation \eqref{ch4BVP1}. Hence, since $\mathcal{A}u = f\in \mathcal{L}_{2,*}(\Omega)$, then $u\in \mathcal{H}_{*}^{1,0}(\Omega;\mathcal{A})$.  Moreover, from Lemma \ref{ch4lema1}, we also deduce that the following equality is satisfied
\begin{equation*}\label{ch4M12a2}
V(\psi - T^{+}u) - W(\varphi_{0} -\gamma^{+}u) = 0 \text{ in } \Omega.
\end{equation*}
By virtue of \eqref{ch4M12a1}, the second term of the previous equation vanishes. Hence,
\begin{equation*}\label{ch4M12a3}
V(\psi - T^{+}u)= 0, \quad \text{ in } \Omega.
\end{equation*}
Applying the trace operator to \eqref{ch4M12a3}, it results
\begin{equation*}\label{ch4M12a32}
\mathcal{V}(\dfrac{\psi - T^{+}u}{a})= 0, \quad \text{on}\,\,\, S.
\end{equation*}
Since $u\in \mathcal{H}_{*}^{1,0}(\Omega;\mathcal{A})$, the first Green identity holds and the conormal derivative $T^{+}u\in H_{*}^{-1/2}(S)$ is well defined. Let $\Psi:=\psi -T^{+}u$, clearly $\Psi\in H_{*}^{-1/2}(S)$, since $\psi\in H_{*}^{-1/2}(S)$ and $H_{*}^{-1/2}(S)$ is a vector space. Then, the hypotheses of Lemma \ref{L2} are satisfied and we can apply the inverse operator $\mathcal{V}^{-1}$ at both sides of \eqref{ch4M12a32} to obtain
\begin{equation}\label{ch4M12a4}
\psi = T^{+}u,\quad\text{ on } S.
\end{equation}

Item $iii)$ immediately follows from the uniqueness of the solution of the Dirichlet boundary value problem\cite[Theorem A.1]{exterior}\cite[Theorem 2]{dufera}.
\end{proof}

\section{Fredholm properties and Invertibility}
In this section, we follow a similar approach as in \cite[Section 7.2]{exterior}, we are going to benefit from the compactness properties of the operator $\mathcal{R}$ to prove invertibility of the operator $\mathcal{M}$. For this, we will split the operator $\mathcal{R}$ into two operators, one whose norm can be made arbitrarily small and another one that is contact. Then, we shall simply make use of the Fredholm alternative to prove the invertibility of the matrix operator $\mathcal{M}$ that defines the system of BDIEs. However, we can only split the operator $\mathcal{R}$ if the PDE satisfies the additional condition
\begin{cond}\label{ch5conda(x)4}
\begin{equation}
\lim_{x \rightarrow \infty} \omega(x)\nabla a(x) = 0.
\end{equation}
\end{cond}

\begin{lemma}\label{decR} Let Condition \ref{cond2} and Condition \ref{ch5conda(x)4} hold. Then, for any $\epsilon>0$ the operator $\mathcal{R}$ can be represented as $\mathcal{R}=\mathcal{R}_{s} + \mathcal{R}_{c}$, where $\parallel \mathcal{R}_{s} \parallel_{\mathcal{H}^{1}(\Omega)}< \epsilon$, while $\mathcal{R}_{c}: \mathcal{H}^{1}(\Omega) \rightarrow \mathcal{H}^{1}(\Omega)$ is compact. 
\end{lemma}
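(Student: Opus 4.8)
The plan is to localise the coefficient data $\nabla\ln a=\nabla a/a$ by a smooth cutoff and to exploit the two hypotheses separately: Condition \ref{ch5conda(x)4} makes the far-field contribution small in operator norm, while the Rellich compact embedding on a bounded ball makes the near-field contribution compact. Recall from the proof of Theorem \ref{ch4thmappingPR} that, after extending $g$ by zero to $\mathbb{R}^2$, the remainder may be written in the factored form
\begin{equation*}
\mathcal{R}g=-\mathbf{P}_{\Delta}\bigl[\nabla\cdot\bigl(g\,\nabla\ln a\bigr)\bigr],
\end{equation*}
so that $\mathcal{R}$ is the composition of the multiplication $g\mapsto g\,\nabla\ln a$, the divergence $\nabla\cdot:L^{2}(\mathbb{R}^{2})\to H^{-1}(\mathbb{R}^{2})$, and the continuous Newtonian potential $\mathbf{P}_{\Delta}:H^{-1}(\mathbb{R}^{2})\to H^{1}(\mathbb{R}^{2})\subset\mathcal{H}^{1}(\mathbb{R}^{2})$. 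First I would fix $\epsilon>0$ and, using Condition \ref{ch5conda(x)4}, choose $R>0$ so large that $\sup_{|x|>R}\lvert\omega_{2}(x)\nabla a(x)\rvert$ is as small as required. I would then take $\chi\in\mathcal{C}^{1}(\mathbb{R}^{2})$ equal to $1$ on the ball $B_{R}$ and vanishing outside $B_{2R}$, and split $\nabla\ln a=\chi\,\nabla\ln a+(1-\chi)\,\nabla\ln a$, inducing $\mathcal{R}=\mathcal{R}_{c}+\mathcal{R}_{s}$, where $\mathcal{R}_{c}$ carries the compactly supported factor $\chi\,\nabla\ln a$ and $\mathcal{R}_{s}$ carries $(1-\chi)\,\nabla\ln a$, supported in $\{|x|>R\}$.

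For the small-norm part $\mathcal{R}_{s}$, I would repeat the estimate of Lemma \ref{LL2} with the extra factor $(1-\chi)$: using $a>C_{1}$ from Condition \ref{cond1}, the embedding $\mathcal{H}^{1}(\Omega)\subset L^{2}(\omega^{-1};\Omega)$, and the continuity of $\nabla\cdot$ and $\mathbf{P}_{\Delta}$, one obtains
\begin{equation*}
\Vert\mathcal{R}_{s}g\Vert_{\mathcal{H}^{1}(\Omega)}\le\frac{C}{C_{1}}\,\bigl\Vert(1-\chi)\,\omega_{2}\nabla a\bigr\Vert_{L^{\infty}(\mathbb{R}^{2})}\,\Vert g\Vert_{\mathcal{H}^{1}(\Omega)},
\end{equation*}
where $C$ collects the norms of $\mathbf{P}_{\Delta}$ and $\nabla\cdot$. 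Since the supremum is now taken only over $\{|x|>R\}$, it can be made smaller than $\epsilon C_{1}/C$ by enlarging $R$, which yields $\Vert\mathcal{R}_{s}\Vert_{\mathcal{H}^{1}(\Omega)}<\epsilon$.

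For the compact part $\mathcal{R}_{c}$, the key observation is that the factor $\chi\,\nabla\ln a$ is a bounded multiplier (by Condition \ref{cond2}) supported in the bounded ball $\overline{B_{2R}}$, so the multiplication operator $M:g\mapsto g\,\chi\,\nabla\ln a$ only sees $g\vert_{B_{2R}}\in H^{1}(B_{2R})$. By the Rellich theorem the embedding $H^{1}(B_{2R})\hookrightarrow L^{2}(B_{2R})$ is compact; composing with multiplication by the $L^{\infty}$ coefficient $\chi\,\nabla\ln a$ shows that $M:\mathcal{H}^{1}(\Omega)\to L^{2}(\mathbb{R}^{2})$ is compact. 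Writing $\mathcal{R}_{c}=-\mathbf{P}_{\Delta}\circ(\nabla\cdot)\circ M$ as the composition of the compact operator $M$ with the bounded operators $\nabla\cdot$ and $\mathbf{P}_{\Delta}$, I conclude that $\mathcal{R}_{c}:\mathcal{H}^{1}(\Omega)\to\mathcal{H}^{1}(\Omega)$ is compact, which together with the previous step establishes the decomposition.

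I expect the main obstacle to be precisely this compactness step: since the Rellich embedding theorem fails on the unbounded domain $\Omega$, the entire argument hinges on arranging the cutoff so that the compact factor is confined to the bounded ball $B_{2R}$, where Rellich does apply, while keeping the bookkeeping clean so that no derivative of $g$ escapes the region where $\chi\,\nabla\ln a$ is supported. The factored representation with a single outer divergence, inherited from the proof of Theorem \ref{ch4thmappingPR}, is what makes this localisation transparent, and the interplay between Condition \ref{ch5conda(x)4} (decay at infinity) and Condition \ref{cond2} (boundedness of $\omega_{2}\nabla a$) is exactly what allows the far-field remainder to be absorbed into the arbitrarily small term.
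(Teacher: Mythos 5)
Your proposal is correct and follows essentially the same route as the paper: the same cutoff decomposition (multiplying the integrand by $\chi$ and $1-\chi$ yields exactly the paper's $\mathcal{R}_{c}g=\mathcal{R}(\chi g)$ and $\mathcal{R}_{s}g=\mathcal{R}((1-\chi)g)$), the same far-field estimate $\Vert\mathcal{R}_{s}g\Vert_{\mathcal{H}^{1}(\Omega)}\lesssim\Vert\omega_{2}\nabla a\Vert_{L^{\infty}(\mathbb{R}^{2}\smallsetminus B_{r})}\Vert g\Vert_{\mathcal{H}^{1}(\Omega)}$ driven by Condition \ref{ch5conda(x)4}, and the same appeal to Rellich on the bounded set $\Omega\cap B_{2r}$ for the compact part. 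Your phrasing of the compactness step (compact multiplication operator into $L^{2}$ composed with the bounded maps $\nabla\cdot$ and $\mathbf{P}_{\Delta}$) is a slightly cleaner bookkeeping of the same argument the paper gives via the restriction operator.
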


\begin{proof}
Let $B(0,r)$ be the ball centered at $0$ with radius $r$ big enough such that $S\subset B_{r}$. Furthermore, let $\chi\in \mathcal{D}(\mathbb{R}^{2})$ be a cut-off function such that $\chi=1$ in $S\subset B_{r}$, $\chi = 0$ in $\mathbb{R}^{2}\smallsetminus B_{2r}$ and $0\leq \chi(x) \leq 1$ in $\mathbb{R}^{2}$. Let us define by $\mathcal{R}_{c}g:=\mathcal{R}(\chi g)$, $\mathcal{R}_{s}g:=\mathcal{R}((1-\chi)g)$ where $g\in \mathcal{H}^{1}(\Omega)$.

We will prove first that the norm of $\mathcal{R}_{s}$ can be made infinitely small

\begin{align*}
 &\parallel \mathcal{R}_{s}g \parallel_{\mathcal{H}^{1}(\Omega)}= \parallel \sum_{i=1}^{2}\mathcal{P}_{\Delta}\left[ \dfrac{\partial}{\partial x_{i}}\left(\sum_{i=1}^{2}\dfrac{\partial(\ln a)}{\partial x_{i}}(1-\chi)g\right)\right]\parallel_{\mathcal{H}^{1}(\Omega)}\leq k \parallel \mathcal{P}_{\Delta} \parallel_{\widetilde{\mathcal{H}}^{-1}(\Omega)},\\ 
&\text{with}\quad k:=\sum_{i=1}^{2}\parallel \dfrac{\partial}{\partial x_{i}}\left(\sum_{i=1}^{2}\dfrac{\partial(\ln a)}{\partial x_{i}}(1-\chi)g\right)\parallel_{\widetilde{\mathcal{H}}^{-1}(\Omega)}\,\, \leq \sum_{i=1}^{2} \parallel \dfrac{\partial(\ln a)}{\partial x_{i}}(1-\chi)g\parallel_{{L}^{2}(\Omega)}\\
 & \hspace{4em} \leq 2 \parallel g \parallel_{L^{2}(\omega_{2}^{-1};\Omega)}\parallel \omega_{2} \nabla a\parallel_{L^{\infty}(\mathbb{R}^{2}\smallsetminus B_{r})}\,\,\leq\,\, 2 \parallel g \parallel_{\mathcal{H}^{1}(\Omega)}\parallel \omega_{2} \nabla a\parallel_{L^{\infty}(\mathbb{R}^{2}\smallsetminus B_{r})}.
\end{align*} 
Consequently, we have the following estimate:
\begin{align*}
\parallel \mathcal{R}_{s}g \parallel_{\mathcal{H}^{1}(\Omega)} &\leq 2 \parallel g \parallel_{\mathcal{H}^{1}(\Omega)}\parallel \omega_{2} \nabla a\parallel_{L^{\infty}(\mathbb{R}^{2}\smallsetminus B_{r})}\parallel \mathcal{P}_{\Delta} \parallel_{\widetilde{\mathcal{H}}^{-1}(\Omega)}.
\end{align*}

Using the previous estimate is easy to see that when $\epsilon\rightarrow +\infty$ the norm $\parallel \mathcal{R}_{s}g \parallel_{\mathcal{H}^{1}(\Omega)}$ tends to $0$. Hence, the norm of the operator $\mathcal{R}_{s}$ can be made arbitrarily small. 

To prove the compactness of the operator $\mathcal{R}_{c}g:=\mathcal{R}(\chi g)$, we recall that ${supp(\chi)\subset \bar{B}(0,2r)}$. Then, one can express $\mathcal{R}_{c}g:= \mathcal{R}_{\Omega_{r}}([\chi g\vert_{\Omega_{r}}])$ where the operator $\mathcal{R}$ is defined now over $\Omega_{r}:=\Omega\cap B_{2r}$ which is a bounded domain. As the restriction operator $\vert_{\Omega_{r}}:\mathcal{H}^{1}(\Omega)\longrightarrow \mathcal{H}^{1}(\Omega_{r})$ is continuous, the operator $\mathcal{R}_{c}g: L^{2}(\Omega_{r})\longrightarrow \mathcal{H}^{1}(\Omega_{r})$ is also continuous. Due to the boundedness of $\Omega_{r}$, we have $\mathcal{H}^{1}(\Omega_{r})= H^{1}(\Omega_{r})$ and thus the compactness of $\mathcal{R}_{c}g$ follows from the Rellich Theorem applied to the embedding $H^{1}(\Omega_{r})\subset L^{2}(\Omega_{r})$.
\end{proof}

\begin{corollary}\label{Rinv} Let Condition \ref{cond2} and Condition \ref{ch5conda(x)4} hold. Then, the operator $I+\mathcal{R}:\mathcal{H}^{1}(\Omega) \rightarrow \mathcal{H}^{1}(\Omega)$ is Fredholm with zero index. 
\end{corollary}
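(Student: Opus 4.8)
The plan is to apply the decomposition furnished by Lemma \ref{decR} together with standard Fredholm theory. First I would invoke Lemma \ref{decR} to write $\mathcal{R} = \mathcal{R}_{s} + \mathcal{R}_{c}$ on $\mathcal{H}^{1}(\Omega)$, where the operator norm of $\mathcal{R}_{s}$ can be made smaller than any prescribed $\epsilon > 0$ and $\mathcal{R}_{c}$ is compact. Choosing $\epsilon < 1$, the operator $I + \mathcal{R}_{s}$ becomes invertible on $\mathcal{H}^{1}(\Omega)$: since $\parallel \mathcal{R}_{s}\parallel_{\mathcal{H}^{1}(\Omega)} < 1$, the Neumann series $\sum_{k=0}^{\infty} (-\mathcal{R}_{s})^{k}$ converges in the operator norm and supplies a bounded two-sided inverse for $I + \mathcal{R}_{s}$.

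Next I would use that every boundedly invertible operator is Fredholm with index zero, so $I + \mathcal{R}_{s}$ is Fredholm of index zero. Writing
\[
I + \mathcal{R} = (I + \mathcal{R}_{s}) + \mathcal{R}_{c},
\]
we see that $I + \mathcal{R}$ is a compact perturbation of the Fredholm operator $I + \mathcal{R}_{s}$. By the stability of both the Fredholm property and the Fredholm index under compact perturbations, $I + \mathcal{R}$ is again Fredholm with the same index, namely zero, which is precisely the assertion of the corollary.

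The substantive difficulty in this result is concentrated entirely in Lemma \ref{decR}, whose proof produces the splitting into a small-norm part $\mathcal{R}_{s}$ and a compact part $\mathcal{R}_{c}$; once that decomposition is in hand, the corollary reduces to routine operator theory. The only points requiring care are, first, to observe that $\epsilon$ may indeed be taken strictly below $1$ so that the Neumann series applies, which is legitimate because Lemma \ref{decR} provides the decomposition for \emph{any} $\epsilon > 0$; and second, to invoke correctly the invariance of the index under compact perturbations, so as to conclude that the index equals exactly zero rather than merely being finite.
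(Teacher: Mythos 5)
Your argument is correct and follows essentially the same route as the paper: invoke Lemma \ref{decR} with $\epsilon<1$, invert $I+\mathcal{R}_{s}$ by a Neumann series, and conclude via stability of the Fredholm property and index under the compact perturbation $\mathcal{R}_{c}$. Your write-up is in fact slightly more careful than the paper's, which states the perturbation relation the wrong way round, but the substance is identical.
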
   

\begin{proof}
Using the previous Lemma, we have $\mathcal{R}=\mathcal{R}_{s} + \mathcal{R}_{c}$ so $\parallel \mathcal{R}_{s}\parallel < 1$ hence $I+\mathcal{R}_{s}$ is invertible. On the other hand $\mathcal{R}_{c}$ is compact and hence $I + \mathcal{R}_{s}$ a compact perturbation of the operator $I+\mathcal{R}$, from where it follows the result.
\end{proof}

\begin{theorem}
If Condition \ref{cond2}, Condition \ref{cond3} and Condition \ref{ch5conda(x)4} hold, then the operator 
\begin{equation}\label{inv1}
\mathcal{M}:\mathcal{H}^{1,0}(\Omega,\mathcal{A})\times H^{-1/2}_{*}(\Omega) \rightarrow \mathcal{H}^{1,0}(\Omega,\mathcal{A})\times H^{1/2}(\Omega),
\end{equation}
is continuous and continuously invertible. 
\end{theorem}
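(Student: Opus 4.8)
The plan is to prove that $\mathcal{M}$ is a Fredholm operator of index zero and that it is injective; continuous invertibility then follows from the open mapping theorem. Continuity of $\mathcal{M}$ has already been recorded from the mapping properties of its entries, so the work concentrates on the Fredholm property and the triviality of the kernel.

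First I would establish the Fredholm property by comparing $\mathcal{M}$ with the block upper-triangular operator
$$\mathcal{M}_{0}:=\begin{bmatrix} I+\mathcal{R} & -V \\ 0 & -\mathcal{V}\end{bmatrix}.$$
Its diagonal entries are Fredholm of index zero: $I+\mathcal{R}$ by Corollary \ref{Rinv}, and $-\mathcal{V}$ by Lemma \ref{L2} (where it is in fact invertible), while the off-diagonal entry $-V$ is bounded by Theorem \ref{thmVW}. Since a bounded block-triangular operator with Fredholm diagonal blocks is itself Fredholm with index equal to the sum of the diagonal indices, $\mathcal{M}_{0}$ is Fredholm of index zero. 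A minor point to verify here is that Corollary \ref{Rinv}, stated on $\mathcal{H}^{1}(\Omega)$, transfers to the graph space $\mathcal{H}^{1,0}(\Omega;\mathcal{A})$; this uses Theorem \ref{thPRH10} together with the decomposition of Lemma \ref{decR} measured in the finer $\mathcal{H}^{1,0}$-norm. It then remains to observe that
$$\mathcal{M}-\mathcal{M}_{0}=\begin{bmatrix} 0 & 0 \\ \gamma^{+}\mathcal{R} & 0\end{bmatrix},$$
so that $\mathcal{M}$ is a compact perturbation of $\mathcal{M}_{0}$, and hence Fredholm of index zero, as soon as $\gamma^{+}\mathcal{R}$ is compact.

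Proving the compactness of $\gamma^{+}\mathcal{R}$ is the crux, and the obstacle is precisely that the Rellich embedding is unavailable on the unbounded domain $\Omega$. Here I would exploit that the boundary $S$ is compact, combined with the splitting $\mathcal{R}=\mathcal{R}_{s}+\mathcal{R}_{c}$ furnished by Lemma \ref{decR}. The term $\gamma^{+}\mathcal{R}_{c}$ is compact because $\mathcal{R}_{c}$ is compact and $\gamma^{+}$ is bounded. For $\gamma^{+}\mathcal{R}_{s}$, the density of $\mathcal{R}_{s}g=\mathcal{R}((1-\chi)g)$ is supported in $\mathbb{R}^{2}\smallsetminus B_{r}$, i.e. at positive distance from $S\subset B_{r}$; on this off-diagonal region the remainder kernel $R(x,y)$ is smooth in $y$, so $\mathcal{R}_{s}g$ is smooth near $S$ and $\gamma^{+}\mathcal{R}_{s}$ maps boundedly into $H^{3/2}(S)$. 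Compactness then follows from the compact embedding $H^{3/2}(S)\hookrightarrow H^{1/2}(S)$ on the compact manifold $S$. Adding the two pieces gives $\gamma^{+}\mathcal{R}$ compact, and therefore $\mathcal{M}$ Fredholm of index zero.

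Finally, for injectivity I would invoke the equivalence theory. If $\mathcal{M}\,\mathcal{U}=0$ with $\mathcal{U}=(u,\psi)$, then $(u,\psi)$ solves the homogeneous system (M), i.e. with right-hand side $(F_{0},\gamma^{+}F_{0}-\varphi_{0})=0$; by Remark \ref{ch4remark} this forces $(f,\varphi_{0})=0$, and Theorem \ref{ch4EqTh}(ii) then shows that $u$ solves the homogeneous Dirichlet problem with $\psi=T^{+}u$. Uniqueness of the Dirichlet BVP, used already in Theorem \ref{ch4EqTh}(iii), gives $u=0$ and hence $\psi=T^{+}u=0$, so $\ker\mathcal{M}=\{0\}$. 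A Fredholm operator of index zero with trivial kernel has trivial cokernel and closed range, hence is bijective, and the open mapping theorem yields that $\mathcal{M}$ is continuously invertible. The hard part will be the compactness of $\gamma^{+}\mathcal{R}$, together with the secondary bookkeeping needed to carry the index-zero property of $I+\mathcal{R}$ from $\mathcal{H}^{1}(\Omega)$ to the space $\mathcal{H}^{1,0}(\Omega;\mathcal{A})$ on which the first diagonal block of $\mathcal{M}$ actually acts.
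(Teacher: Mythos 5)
Your proposal is correct in outline and reaches the theorem by the same overall strategy (Fredholm of index zero plus injectivity from the equivalence theorem), but the Fredholm step is organised genuinely differently from the paper. The paper never needs compactness of $\gamma^{+}\mathcal{R}$: it splits $\mathcal{R}=\mathcal{R}_{s}+\mathcal{R}_{c}$ once via Lemma \ref{decR} and then writes $\mathcal{M}=(\mathcal{M}_{0}+\mathcal{M}_{s})+\mathcal{M}_{c}$, where $\mathcal{M}_{0}$ is the invertible triangular operator with diagonal $I$ and $-\mathcal{V}$, $\mathcal{M}_{s}$ collects \emph{both} $\mathcal{R}_{s}$ and $\gamma^{+}\mathcal{R}_{s}$ and is made small in norm (the trace entry is controlled for free, since $\Vert\gamma^{+}\mathcal{R}_{s}\Vert\leq\Vert\gamma^{+}\Vert\,\Vert\mathcal{R}_{s}\Vert$ by continuity of the trace), and $\mathcal{M}_{c}$ collects $\mathcal{R}_{c}$ and $\gamma^{+}\mathcal{R}_{c}$, both compact. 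You instead take the full $I+\mathcal{R}$ as a diagonal block, invoke Corollary \ref{Rinv}, and then must prove that the off-triangular entry $\gamma^{+}\mathcal{R}$ is compact; this is the one place where your argument costs more than the paper's. Your sketch of that compactness (the density $(1-\chi)g$ sits at positive distance from the compact boundary $S$, so $\mathcal{R}_{s}g$ is harmonic, hence smooth, in a neighbourhood of $S$, and $\gamma^{+}\mathcal{R}_{s}$ smooths into $H^{3/2}(S)$, which embeds compactly into $H^{1/2}(S)$) is workable, but to make it rigorous you must check that the $y$-derivatives of the kernel remain integrable against $g\nabla\ln a\in L^{2}(\mathbb{R}^{2})$ at infinity (differentiation in $y$ improves the decay in $x$, and Lemma \ref{LL2} supplies the $L^{2}$ bound, so this goes through); the paper's arrangement avoids this extra estimate entirely. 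Your flagged bookkeeping point, that the decomposition of Lemma \ref{decR} and the compactness of $\mathcal{R}_{c}$ are stated in $\mathcal{H}^{1}(\Omega)$ while the first component of $\mathcal{M}$ acts on $\mathcal{H}^{1,0}(\Omega;\mathcal{A})$, is a real issue that the paper itself glosses over, so you are not behind it there. The injectivity argument via Remark \ref{ch4remark}, Theorem \ref{ch4EqTh} and uniqueness of the Dirichlet BVP is identical to the paper's.
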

\begin{proof}
First of all, the operator \eqref{inv1} can be represented in matrix notation as follows  
\begin{equation*}
   \mathcal{M}=
  \left[ {\begin{array}{ccc}
   I+\mathcal{R} & -V  \\
   \gamma^{+}\mathcal{R} & -\mathcal{V} 
  \end{array} } \right].
\end{equation*}
Theorem \ref{ch5thmapVW} and Theorem \ref{thPRH10} guarantee the continuity of the operator \eqref{inv1}. Let \[
   \mathcal{M}_{0}=
  \left[ {\begin{array}{cc}
   I+\mathcal{R}_{s}& -V  \\
   0 & -\mathcal{V}\\
  \end{array} } \right].
\]
The operators placed in the diagonal of the matrix $I:\mathcal{H}^{1,0}(\Omega,\mathcal{A})\longrightarrow\mathcal{H}^{1,0}(\Omega,\mathcal{A})$ and $\mathcal{V}:H^{-1/2}_{*}(S)\longrightarrow H^{1/2}(S)$
are continuous and invertible\cite[Theorem 6.22]{steinbach}.  Therefore, the operator $\mathcal{M}_{0}$ is invertible. 

Let us proceed applying Lemma \ref{decR} and decompose the operator $\mathcal{R}$ as $\mathcal{R} = \mathcal{R}_{c}+\mathcal{R}_{s}$. Using this decomposition, we define the following two new matrix operators
\begin{equation*}
   \mathcal{M}_{s}=\left[ {\begin{array}{cc}
   \mathcal{R}_{s}& 0  \\
   \gamma^{+}\mathcal{R}_{s} &0\\
  \end{array} } \right], \hspace{2cm}
  \mathcal{M}_{c}:=\left[ {\begin{array}{cc}
   \mathcal{R}_{c}& 0  \\
   \gamma^{+}\mathcal{R}_{c} &0\\
  \end{array} } \right].
\end{equation*}
Applying Lemma \ref{decR}, we can choose $\epsilon$ small enough so that $\parallel\mathcal{M}_{s}\parallel<\epsilon$. Then, we can choose $\epsilon$ to satisfy the inequality
$$ \parallel \mathcal{M}_{s}\parallel <\dfrac{1}{\parallel\mathcal{M}_{0}^{-1} \parallel}$$
so that the operator $\mathcal{M}_{s}+\mathcal{M}_{0}$ is continuously invertible. Additionally, we remark that the operator $\mathcal{M}_{c}$ is compact. Therefore, the operator $\mathcal{M}$ is a Fredholm operator with index zero for being the sum of an invertible operator and a compact operator. By the equivalence theorem, Theorem \ref{ch4EqTh}, the homogeneous BDIE \eqref{system} has only solution and therefore, the operator $\mathcal{M}$ is invertible by the Fredholm Alternative theorem. 
\end{proof}

A direct consequence of the previous argument and Lemma \ref{ch4lema1} is the following corollary. 
\begin{corollary} If Condition \ref{cond2}, Condition \ref{cond3} and Condition \ref{ch5conda(x)4} hold, then the operator 
\begin{equation}\label{inv2}
\mathcal{M}:\mathcal{H}^{1}(\Omega)\times H^{-1/2}_{*}(\Omega) \rightarrow \mathcal{H}^{1}(\Omega)\times H^{1/2}(\Omega),
\end{equation}
\end{corollary}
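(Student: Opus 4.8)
The plan is to mirror the invertibility argument for the operator $\mathcal{M}$ acting on $\mathcal{H}^{1,0}(\Omega;\mathcal{A})\times H^{-1/2}_{*}(S)$, and then transfer the result to the setting \eqref{inv2} where the first component lives in the larger space $\mathcal{H}^{1}(\Omega)$ rather than $\mathcal{H}^{1,0}(\Omega;\mathcal{A})$. First I would note that the continuity of the operator \eqref{inv2} follows immediately from Theorem \ref{thmVW} (for $V$ and the trace-related mapping of $\mathcal{R}$) together with the continuity of $\mathcal{R}:\mathcal{H}^{1}(\Omega)\to\mathcal{H}^{1}(\Omega)$ established inside the proof of Theorem \ref{thPRH10}; none of these require the stronger target space $\mathcal{H}^{1,0}(\Omega;\mathcal{A})$, so the matrix entries are all well defined and bounded on the coarser domain and range.

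The core of the proof is to show that \eqref{inv2} is injective and surjective by reducing it to the already-established invertibility of \eqref{inv1}. The key observation is Lemma \ref{ch4lema1}: if a pair $(u,\psi)\in\mathcal{H}^{1}(\Omega)\times H^{-1/2}_{*}(S)$ solves the system $\mathcal{M}(u,\psi)=\mathcal{F}$ with right-hand side $\mathcal{F}\in\mathcal{H}^{1}(\Omega)\times H^{1/2}(S)$, then the first equation \eqref{ch4SM12v} is precisely of the form \eqref{ch4G3ind} (with $\Phi=\varphi_0$ absorbed into $F_0$), and Lemma \ref{ch4lema1} forces $u\in\mathcal{H}^{1,0}(\Omega;\mathcal{A})$ automatically. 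Thus any solution in the larger space in fact already lives in $\mathcal{H}^{1,0}(\Omega;\mathcal{A})$, so the solution sets of \eqref{inv1} and \eqref{inv2} coincide. Injectivity of \eqref{inv2} then follows from injectivity of \eqref{inv1}: if $\mathcal{M}(u,\psi)=0$ with $u\in\mathcal{H}^{1}(\Omega)$, the lemma upgrades $u$ to $\mathcal{H}^{1,0}(\Omega;\mathcal{A})$, and the invertibility of \eqref{inv1} gives $(u,\psi)=0$.

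For surjectivity, I would take an arbitrary $\mathcal{F}=(F_1,F_2)\in\mathcal{H}^{1}(\Omega)\times H^{1/2}(S)$ and seek a preimage. The subtlety is that the right-hand side of \eqref{inv1} lies in $\mathcal{H}^{1,0}(\Omega;\mathcal{A})\times H^{1/2}(S)$, which is strictly smaller than the range space of \eqref{inv2}. Here I would argue that for the particular structure of the system, the surjectivity of \eqref{inv2} still follows, because by the equivalence Theorem \ref{ch4EqTh} one can produce a solution directly from the underlying Dirichlet problem: given data reconstructed from $\mathcal{F}$, solve the BVP \eqref{ch4BVP} (uniquely solvable via \cite[Theorem A.1]{exterior}), set $\psi=T^{+}u$, and verify through the third Green identities \eqref{ch4green3}–\eqref{ch43GG} that the resulting pair solves $\mathcal{M}(u,\psi)=\mathcal{F}$. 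Combining injectivity with surjectivity, and invoking the bounded inverse theorem (the operator is a continuous bijection between Banach spaces), yields continuous invertibility of \eqref{inv2}.

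The main obstacle I anticipate is the surjectivity step, specifically the mismatch between the range space $\mathcal{H}^{1}(\Omega)\times H^{1/2}(S)$ of \eqref{inv2} and the smaller space $\mathcal{H}^{1,0}(\Omega;\mathcal{A})\times H^{1/2}(S)$ on which \eqref{inv1} is known to be invertible. One must check carefully that an arbitrary $F_1\in\mathcal{H}^{1}(\Omega)$ (not necessarily of the form $\mathcal{P}f-W\varphi_0$) can genuinely be hit; the cleanest resolution is to show that the range of $I+\mathcal{R}$ composed with the single-layer inversion from Lemma \ref{L2} already exhausts the target, using that $V\mathcal{V}^{-1}:H^{1/2}(S)\to\mathcal{H}^{1}(\Omega)$ together with $I+\mathcal{R}$ (Fredholm of index zero by Corollary \ref{Rinv}) covers all of $\mathcal{H}^{1}(\Omega)$. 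Verifying this range condition, rather than the abstract Fredholm bookkeeping, is where the real work lies.
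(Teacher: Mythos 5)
Your injectivity argument is sound and is exactly what the paper intends: for the homogeneous system the first equation is of the form \eqref{ch4G3ind} with $f=0$, $\Phi=0$, so Lemma \ref{ch4lema1} upgrades $u$ from $\mathcal{H}^{1}(\Omega)$ to $\mathcal{H}^{1,0}(\Omega;\mathcal{A})$ and the equivalence theorem forces $(u,\psi)=0$. The gap is in surjectivity. Your first route --- reconstructing Dirichlet data from $\mathcal{F}$, solving the BVP, and reading off a preimage via the third Green identities --- only produces preimages of right-hand sides of the special form $\mathcal{F}=(F_{0},\gamma^{+}F_{0}-\varphi_{0})$ with $F_{0}=\mathcal{P}f-W\varphi_{0}$ as in \eqref{ch4F0term}; these form a proper subset of $\mathcal{H}^{1}(\Omega)\times H^{1/2}(S)$, so this cannot give surjectivity onto the whole range space. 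Your related claim that Lemma \ref{ch4lema1} upgrades \emph{any} solution of $\mathcal{M}(u,\psi)=\mathcal{F}$ to $\mathcal{H}^{1,0}(\Omega;\mathcal{A})$ is also false for general $\mathcal{F}$: the lemma requires the right-hand side of the first equation to be $\mathcal{P}f-W\Phi+V\Psi$. In fact, since $\mathcal{R}u$ and $V\psi$ always land in $\mathcal{H}^{1,0}(\Omega;\mathcal{A})$ (Theorem \ref{thPRH10}, Corollary \ref{ch5thmapVW}), a solution of $u=F_{1}-\mathcal{R}u+V\psi$ lies in $\mathcal{H}^{1,0}(\Omega;\mathcal{A})$ precisely when $F_{1}$ does, so the solution sets of \eqref{inv1} and \eqref{inv2} do \emph{not} coincide once $F_{1}\in\mathcal{H}^{1}(\Omega)\smallsetminus\mathcal{H}^{1,0}(\Omega;\mathcal{A})$; the operator \eqref{inv2} genuinely has a larger range, which is the whole point of the corollary.

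The intended proof avoids verifying surjectivity by hand: one reruns the Fredholm decomposition of the preceding theorem on the wider spaces. All of its ingredients already live on $\mathcal{H}^{1}(\Omega)$ rather than $\mathcal{H}^{1,0}(\Omega;\mathcal{A})$: Lemma \ref{decR} and Corollary \ref{Rinv} are stated for $\mathcal{R}:\mathcal{H}^{1}(\Omega)\rightarrow\mathcal{H}^{1}(\Omega)$, the operator $V:H^{-1/2}_{*}(S)\rightarrow\mathcal{H}^{1}(\Omega)$ is continuous by Theorem \ref{thmVW}, and $\mathcal{V}:H^{-1/2}_{*}(S)\rightarrow H^{1/2}(S)$ is invertible by Lemma \ref{L2}. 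Hence $\mathcal{M}=\left(\mathcal{M}_{0}+\mathcal{M}_{s}\right)+\mathcal{M}_{c}$ with $\mathcal{M}_{0}+\mathcal{M}_{s}$ invertible and $\mathcal{M}_{c}$ compact on $\mathcal{H}^{1}(\Omega)\times H^{-1/2}_{*}(S)$, so \eqref{inv2} is Fredholm of index zero, and your injectivity step (the only place Lemma \ref{ch4lema1} is actually needed) finishes the argument via the Fredholm alternative, with continuity of the inverse for free. Your closing remark about $V\mathcal{V}^{-1}$ and $I+\mathcal{R}$ gestures in this direction, but it is precisely the index-zero bookkeeping that makes the ``real work'' of checking a range condition unnecessary.
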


\section{Conclusions}
In this paper, we have considered a new parametrix for the Dirichlet problem with variable coefficient in two-dimensional unbounded domain, where the right hand side function is from $L_{2}(\omega_{2},\Omega)$ and the Dirichlet data from the space $H^{\frac{1}{2}}(S).$  A BDIEs for the original BVP has been obtained. Equaivalence of the BDIE system to the original BVP was proved in the case when the right hand side of the PDE is from $L^{2}(\omega_{2};\Omega)$ and the Dirichlet data from the space $H^{\frac{1}{2}}(S).$ 

Further generalised results for Lipschitz domains and BVPs with non-smooth coefficientscan also be obtain by using the generalised canonical conormal derivative operator defined in \cite{mikhailovlipschitz}. Moreover, these results can be generalised to systems of PDEs such as the Stokes system\cite{carlosstokes}.

\end{document}